\theoremstyle{plain}
\newtheorem{theorem}{Theorem}[section]
\newtheorem{lemma}[theorem]{Lemma}
\newtheorem{proposition}[theorem]{Proposition}
\newtheorem{corollary}[theorem]{Corollary}
\theoremstyle{definition}
\newtheorem{definition}[theorem]{Definition}
\newcommand{\N}{\mathbb{N}}
\newcommand{\Z}{\mathbb{Z}}
\newcommand{\R}{\mathbb{R}}
\newcommand{\ol}{\overline}
\newcommand{\eps}{\varepsilon}
\newcommand{\vp}{\varphi}
\DeclareMathOperator{\pag}{pag}
\DeclareMathOperator{\val}{val}
\DeclareMathOperator{\cp}{\square}
\begin{document}

\title{Peg solitaire and Conway's soldiers on infinite graphs}

\author{Valentino Vito}
\address{Faculty of Computer Science\\
Universitas Indonesia\\
Depok 16424, Indonesia}
\email{valentino.vito11@ui.ac.id}

\keywords{Peg solitaire, Conway's soldiers, infinite game, pagoda function, golden ratio}
\subjclass{Primary 05C57; Secondary 05C63, 91A43, 91A46}

\begin{abstract}
Peg solitaire is classically a one-player game played on a grid board containing pegs. The goal of the game is to have a single peg remaining on the board by sequentially jumping with a peg over an adjacent peg onto an empty cell while eliminating the jumped peg. Conway's soldiers is a related game played on $\mathbb{Z}^2$ with pegs initially located on the half-space $y \le 0$. The goal is to bring a peg as far up as possible on the board using peg solitaire jumps. Conway showed that bringing a peg to the line $y = 5$ is impossible with finitely many jumps. Applying Conway's approach, we prove an analogous impossibility property on graphs. In addition, we generalize peg solitaire on finite graphs as introduced by Beeler and Hoilman (2011) to an infinite game played on countable graphs.
\end{abstract}

\maketitle

\section{Introduction}\label{sec1}

Peg solitaire is traditionally known as a one-player game played on a grid board whose cells can each contain at most one peg. The grid board in its entirety may assume various shapes, most notably a plus-sign shape in the English version of peg solitaire. Every turn, the player jumps with a peg over an adjacent peg onto an empty cell in the same direction two cells away, eliminating the jumped peg from the board. The goal of the game is to eliminate every peg except for one. This game has been analyzed for both grids with square-shaped cells \cites{bell2007diagonal, bell2007fresh, jefferson2006} as well as grids with hexagonal-shaped cells \cites{bell2008, hentzel1968}. The reader may consult \cite{beasley1985} for an in-depth treatment of peg solitaire.

A well-known game associated with peg solitaire is Conway's soldiers, also known as \emph{solitaire army}. The game is played on the infinite grid $\Z^2$ with an initial configuration of pegs (or in the context of this game, soldiers) located everywhere on the integer lattice points of the half-space $y \le 0$. The goal of the game is to move a peg as far up as possible on the board with only peg solitaire jumps. A generalization of Conway's soldiers played on $\Z^d$ was considered by Eriksson and Lindstr\"om \cite{eriksson1995}. Additionally, the related notion of pegging numbers on graphs was studied in \cites{helleloid2009, levavi2013}. The \textit{pegging number} of a graph is defined as the minimum number of pegs so that no matter how the pegs are positioned on the graph, any vertex can be filled with a peg after some number of moves.

Conway \cite{berlekamp2004} proved that while we can bring a peg to the line $y = 4$, reaching $y = 5$ is impossible with finitely many jumps. Conway's argument relies on a so-called \emph{pagoda function} on the grid, where a peg is given a higher value as it moves further away from the initial half-space $y \le 0$. Conway showed that the total value of pegs on the board cannot increase throughout the game and that a peg located on $y = 5$ would produce a value greater than the total value of pegs at the start of the game, producing a contradiction. Conway's pagoda function is often known as the \emph{golden pagoda} for its reliance on the golden ratio.

In the present paper, we analyze peg solitaire and its variant Conway's soldiers on graph-based boards. Peg solitaire played on graphs was first proposed by Beeler and Hoilman \cite{beeler2011} in 2011. Since its introduction, studies on peg solitaire have been strictly done on finite graphs \cites{beeler2012, beeler2015, engbers2015}, whereas our focus here lies on infinite graphs. We refer to \cites{fraenkel2012} for a survey on combinatorial games in general.

\begin{figure}
    \centering
    \begin{tikzpicture}[x=1cm, y=1cm]
        \draw[thick] (0, 0) -- (2, 0);
        \draw[thick] (4, 0) -- (6, 0);
        \draw[dashed, ->] (0, 0.2) to [bend left=30] (2, 0.2);
        \draw[ultra thick, ->] (2.7, 0) -- (3.3, 0);
        
        \draw[fill=black] (0, 0) circle (2pt);
        \draw[fill=black] (1, 0) circle (2pt);
        \draw[fill=white] (2, 0) circle (2pt);
        \draw[fill=white] (4, 0) circle (2pt);
        \draw[fill=white] (5, 0) circle (2pt);
        \draw[fill=black] (6, 0) circle (2pt);
        
        \node at (0, -0.3) {$u$};
        \node at (1, -0.3) {$v$};
        \node at (2, -0.3) {$w$};
        \node at (4, -0.3) {$u$};
        \node at (5, -0.3) {$v$};
        \node at (6, -0.3) {$w$};
    \end{tikzpicture}
    
    \caption{The jump $u \cdot \protect\vv{v} \cdot w$.}
    \label{fig1}
\end{figure}
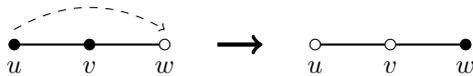

Our game of peg solitaire is played on a possibly infinite graph $G$ whose vertex and edge set are denoted by $V(G)$ and $E(G)$, respectively. This paper focuses on connected, countable graphs $G$, hence both $V(G)$ and $E(G)$ are assumed to be countable. Let $S_0 \subseteq V(G)$ be the \emph{initial state} of the game, which consists of vertices which contain a peg at the beginning of the game. Vertices not in $S_0$ are thus empty at game start. Each turn, the player jumps with a peg on a vertex $u$ over an adjacent vertex $v$ containing a peg into an empty vertex $w$ adjacent to $v$ other than $u$, emptying $v$ in the process. We denote this jump by $u \cdot \vv{v} \cdot w$, and we say that $u$, $v$, and $w$ are \textit{involved} in the jump; see Figure \ref{fig1} for illustration. After this first jump $j_0$, the state of the game changes from $S_0$ to $S_1$, where $S_1 = (S_0 \setminus \{u, v\}) \cup \{w\}$. In general, the state changes from $S_{n-1}$ to $S_n$ after the $n$-th jump $j_{n-1}$. The sequence $(S_n)_{n=0}^k$ of \emph{game states} then describes a game of peg solitaire in which jumps $(j_n)_{n=0}^{k-1}$ are made.

In Section \ref{sec2}, we present our version of Conway's soldiers on graphs, which we feel captures the spirit of the game well. The \textit{limit state} $S_\omega$ is defined in Section \ref{sec3}, and the notion of valued graphs is introduced in Section \ref{sec4} based on the discussion in Section \ref{sec2}. In Section \ref{sec5}, we study graphs whose pegs can be entirely cleared after infinitely many moves. Lastly, we provide several conditions for the clearability of a Cartesian product of graphs in Section \ref{sec6}.

\section{Soldiers on a graph}\label{sec2}

Recall that a graph is \emph{locally finite} if each of its vertices has finite degree. Let $G$ be a connected, locally finite graph, from which we pick some vertex $v$. Throughout this section, we designate $v$ as the \textit{goal point} of $G$. Let $\N = \{0, 1, 2, \dots\}$. For $n \in \N$, define $D_n(v)$ as the set of vertices of distance $n$ from $v$. This set is finite by local finiteness of $G$. Also, set $d_n(v) = |D_n(v)|$. The sequence $(d_n(v))_{n = 0}^\infty$ has no zero term whenever $G$ is an infinite connected graph; otherwise, the sequence eventually vanishes.

We say that the goal point $v$ is \emph{always reachable} if for every $k \in \N$, there exists an initial state $S_0 \subseteq \bigcup_{n=k}^\infty D_n(v)$ on $G$ such that a peg can be brought to $v$ with finitely many jumps; otherwise, $v$ is \emph{eventually unreachable}. Obviously, every vertex of a finite graph is eventually unreachable since $\bigcup_{n=k}^\infty D_n(v) = \emptyset$ for sufficiently large $k$. Our goal for this section is to present a relationship between the reachability of $v$ and the growth rate of $(d_n(v))$.

We denote the positive root of the polynomial $x^2 - x - 1$ by $\vp$. This constant is the famed \emph{golden ratio} and is equal to $\frac{1 + \sqrt{5}}{2}$. The reciprocal of $\vp$ is denoted by $\sigma$, and it satisfies the identity
\begin{equation}\label{eq1}
    \sigma^i = \sigma^{i+1} + \sigma^{i+2}.
\end{equation}
Note that $\sigma$ is equal to $\frac{\sqrt{5} - 1}{2}$. Our main theorem of this section is the following.

\begin{theorem}\label{thm1}
Let $G$ be a connected, locally finite graph, and let $v$ be its goal point. The vertex $v$ is eventually unreachable if $d_n(v) = O(\vp^{\eps n})$ for some $\eps < 1$. Moreover, this bound is sharp in that it does not necessarily hold if $\eps = 1$ is taken.
\end{theorem}

To prove Theorem \ref{thm1}, we need to define a suitable pagoda function on $G$. Recall from Section \ref{sec1} that a pagoda provides some value to each peg depending on its position on the board.

\begin{definition}
Let $G$ be a connected, locally finite graph. A \emph{pagoda on $G$} is a function $\pag \colon V(G) \to \R$ such that for every $3$-vertex path $abc$, we have
\begin{equation}\label{eq2}
    \pag(a) \le \pag(b) + \pag(c).
\end{equation}
The \emph{value of a state $S \subseteq V(G)$} is then defined as
\[\val(S) = \sum_{u \in S} \pag(u)\]
whenever the sum converges absolutely.
\end{definition}

A pagoda ensures that, given the convergence of $\val(S_0)$, the values of all other game states also converge and $\val(S_{n+1}) \le \val(S_n)$ for $n \in \N$; that is, the state value of the game does not increase throughout play. We set our pagoda on $G$ such that it depends on the chosen goal point $v$. The pagoda provides a higher value to vertices closer to $v$ than to vertices further away from $v$. Namely, we define
\[\pag(u) = \sigma^{d(u,v)}.\]
Using Equation (1), it is easy to see that this pagoda satisfies inequality (\ref{eq2}) and that equality holds when
\[d(a,v) + 2 = d(b,v) + 1 = d(c,v).\]

With this pagoda, we provide a condition on $(d_n(v))$ for the existence of $\val(S)$ for all $S$.

\begin{proposition}\label{prop1}
Let $G = (V, E)$ be a connected, locally finite graph, and let $v$ be its goal point. If $d_n(v) = O(\vp^{\eps n})$ for some $\eps < 1$, then $\val(S) = \sum_{u \in S} \sigma^{d(u,v)}$ converges for every state $S \subseteq V$.
\end{proposition}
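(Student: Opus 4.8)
The plan is to bound the series $\sum_{u \in S} \sigma^{d(u,v)}$ by the larger series $\sum_{u \in V} \sigma^{d(u,v)}$, which has all positive terms, and then show the latter converges under the hypothesis $d_n(v) = O(\vp^{\eps n})$. Since every term is positive, absolute convergence is the same as convergence, and convergence of the sum over $S$ follows from convergence of the sum over all of $V$ by comparison. So it suffices to handle $S = V$.

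First I would organize the sum over $V$ by distance from $v$: writing $V = \bigsqcup_{n=0}^\infty D_n(v)$, which is a genuine partition since $G$ is connected so every vertex has a well-defined finite distance to $v$, we get
\[
\sum_{u \in V} \sigma^{d(u,v)} = \sum_{n=0}^\infty \sum_{u \in D_n(v)} \sigma^{n} = \sum_{n=0}^\infty d_n(v)\, \sigma^n.
\]
This rearrangement is valid because all terms are nonnegative (so the sum is well-defined in $[0,\infty]$ regardless of order). Now I would invoke the hypothesis: there is a constant $C$ with $d_n(v) \le C \vp^{\eps n}$ for all $n$, hence
\[
\sum_{n=0}^\infty d_n(v)\, \sigma^n \le C \sum_{n=0}^\infty \vp^{\eps n} \sigma^n = C \sum_{n=0}^\infty \big(\vp^{\eps}\sigma\big)^n.
\]
Since $\sigma = \vp^{-1}$, the ratio is $\vp^{\eps}\sigma = \vp^{\eps - 1}$, and because $\eps < 1$ we have $\eps - 1 < 0$, so $\vp^{\eps-1} < 1$ (as $\vp > 1$). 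The geometric series therefore converges, giving $\sum_{u \in V}\sigma^{d(u,v)} < \infty$, and then $\val(S) = \sum_{u \in S}\sigma^{d(u,v)} \le \sum_{u \in V}\sigma^{d(u,v)} < \infty$ for every $S \subseteq V$, with absolute convergence being automatic from positivity.

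There is no serious obstacle here; the only point requiring a little care is justifying the regrouping of the sum by distance-shells, which I would phrase in terms of sums of nonnegative terms (Tonelli-type reasoning, or simply the fact that a sum of nonnegatives over a countable index set is independent of the order and of any partition into blocks). Everything else is the elementary estimate $\vp^{\eps-1}<1$ and the geometric series formula.
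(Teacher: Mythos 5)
Your proof is correct and follows essentially the same route as the paper: reduce to $S = V$, group the sum by distance shells as $\sum_n d_n(v)\sigma^n$, and compare with the geometric series $\sum_n \vp^{(\eps-1)n}$, which converges since $\eps < 1$. Your additional remarks on the nonnegative rearrangement merely make explicit what the paper leaves implicit.
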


\begin{proof}
The convergence of
\[\val(V) = \sum_{u \in V} \sigma^{d(u,v)} = \sum_{n = 0}^{\infty} d_n(v)\sigma^n\]
can be obtained from the convergence of
\[\sum_{n = 0}^{\infty} \vp^{\eps n}\sigma^n = \sum_{n = 0}^{\infty} \vp^{(\eps-1)n}\]
for $\eps < 1$. It follows that $\val(S) = \sum_{u \in S} \sigma^{d(u,v)}$ converges for all $S \subseteq V$.
\end{proof}

\begin{figure}
    \centering
    \begin{tikzpicture}[x=3.2cm, y=3.2cm]
        \draw[thick] (0, 2) -- (0, 1/8);
        \draw[thick] (1, 1) -- (1, 1/8);
        \draw[thick] (3/2, 1/2) -- (3/2, 1/8);
        \draw[thick] (1/2, 1/2) -- (1/2, 1/8);
        \draw[thick] (7/4, 1/4) -- (7/4, 1/8);
        \draw[thick] (5/4, 1/4) -- (5/4, 1/8);
        \draw[thick] (3/4, 1/4) -- (3/4, 1/8);
        \draw[thick] (1/4, 1/4) -- (1/4, 1/8);
        \draw[thick] (1, 1) -- (0, 1);
        \draw[thick] (3/2, 1/2) -- (1, 1/2);
        \draw[thick] (1/2, 1/2) -- (0, 1/2);
        \draw[thick] (7/4, 1/4) -- (3/2, 1/4);
        \draw[thick] (5/4, 1/4) -- (1, 1/4);
        \draw[thick] (3/4, 1/4) -- (1/2, 1/4);
        \draw[thick] (1/4, 1/4) -- (0, 1/4);
        \draw[thick] (15/8, 1/8) -- (7/4, 1/8);
        \draw[thick] (13/8, 1/8) -- (3/2, 1/8);
        \draw[thick] (11/8, 1/8) -- (5/4, 1/8);
        \draw[thick] (9/8, 1/8) -- (1, 1/8);
        \draw[thick] (7/8, 1/8) -- (3/4, 1/8);
        \draw[thick] (5/8, 1/8) -- (1/2, 1/8);
        \draw[thick] (3/8, 1/8) -- (1/4, 1/8);
        \draw[thick] (1/8, 1/8) -- (0, 1/8);
        
        \draw[fill=green] (0, 1) circle (4pt);
        \draw[fill=green] (0, 1/2) circle (4pt);
        \draw[fill=green] (0, 1/4) circle (4pt);
        \draw[fill=green] (0, 1/8) circle (4pt);
        \draw[fill=green] (1, 1/2) circle (4pt);
        \draw[fill=green] (1, 1/4) circle (4pt);
        \draw[fill=green] (1, 1/8) circle (4pt);
        \draw[fill=green] (3/2, 1/4) circle (4pt);
        \draw[fill=green] (3/2, 1/8) circle (4pt);
        \draw[fill=green] (1/2, 1/4) circle (4pt);
        \draw[fill=green] (1/2, 1/8) circle (4pt);
        \draw[fill=green] (7/4, 1/8) circle (4pt);
        \draw[fill=green] (5/4, 1/8) circle (4pt);
        \draw[fill=green] (3/4, 1/8) circle (4pt);
        \draw[fill=green] (1/4, 1/8) circle (4pt);
        
        \draw[fill=black] (0, 2) circle (2pt);
        \draw[fill=black] (0, 1) circle (2pt);
        \draw[fill=black] (0, 1/2) circle (2pt);
        \draw[fill=black] (0, 1/4) circle (2pt);
        \draw[fill=black] (0, 1/8) circle (2pt);
        \draw[fill=black] (1, 1) circle (2pt);
        \draw[fill=black] (1, 1/2) circle (2pt);
        \draw[fill=black] (1, 1/4) circle (2pt);
        \draw[fill=black] (1, 1/8) circle (2pt);
        \draw[fill=black] (3/2, 1/2) circle (2pt);
        \draw[fill=black] (3/2, 1/4) circle (2pt);
        \draw[fill=black] (3/2, 1/8) circle (2pt);
        \draw[fill=black] (1/2, 1/2) circle (2pt);
        \draw[fill=black] (1/2, 1/4) circle (2pt);
        \draw[fill=black] (1/2, 1/8) circle (2pt);
        \draw[fill=black] (7/4, 1/4) circle (2pt);
        \draw[fill=black] (7/4, 1/8) circle (2pt);
        \draw[fill=black] (5/4, 1/4) circle (2pt);
        \draw[fill=black] (5/4, 1/8) circle (2pt);
        \draw[fill=black] (3/4, 1/4) circle (2pt);
        \draw[fill=black] (3/4, 1/8) circle (2pt);
        \draw[fill=black] (1/4, 1/4) circle (2pt);
        \draw[fill=black] (1/4, 1/8) circle (2pt);
        \draw[fill=black] (15/8, 1/8) circle (2pt);
        \draw[fill=black] (13/8, 1/8) circle (2pt);
        \draw[fill=black] (11/8, 1/8) circle (2pt);
        \draw[fill=black] (9/8, 1/8) circle (2pt);
        \draw[fill=black] (7/8, 1/8) circle (2pt);
        \draw[fill=black] (5/8, 1/8) circle (2pt);
        \draw[fill=black] (3/8, 1/8) circle (2pt);
        \draw[fill=black] (1/8, 1/8) circle (2pt);
        
        \draw[fill=black] (1, -0.1) circle (1pt);
        \draw[fill=black] (1, -0.2) circle (1pt);
        \draw[fill=black] (1, -0.3) circle (1pt);
        \draw[fill=black] (-0.35, -0.1) circle (1pt);
        \draw[fill=black] (-0.35, -0.2) circle (1pt);
        \draw[fill=black] (-0.35, -0.3) circle (1pt);
        
        \node at (0.1, 2) {$v$};
        \node at (-0.35, 2) {layer $0$};
        \node at (-0.35, 1) {layer $1$};
        \node at (-0.35, 1/2) {layer $2$};
        \node at (-0.35, 1/4) {layer $3$};
        \node at (-0.35, 1/8) {layer $4$};
        
    \end{tikzpicture}
    
    \caption{A tree $T$ whose vertex $v$ is always reachable. We mark some vertices by a green circle, while the others are left unmarked.}
    \label{fig2}
\end{figure}
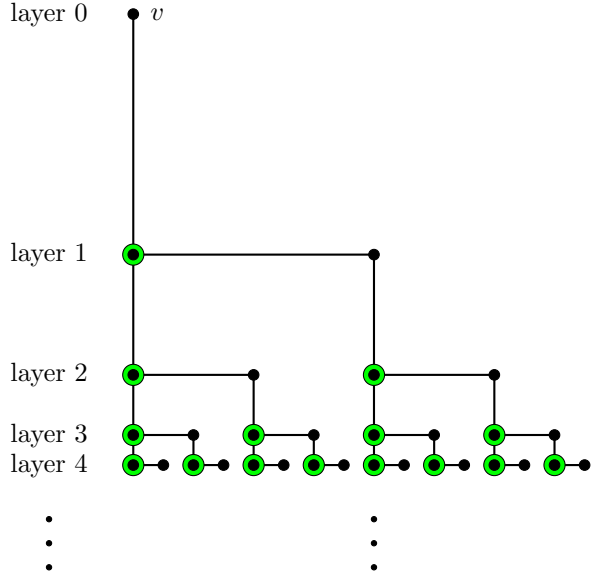

\begin{proof}[Proof of Theorem \ref{thm1}]
Let $S$ be any state with $v \in S$. Since $\sum_{n = 0}^{\infty} d_n(v)\sigma^n$ converges when $d_n(v) = O(\vp^{\eps n})$, we have
\[\sum_{n = k}^{\infty} d_n(v)\sigma^n < 1 \le \val(S)\]
for sufficiently large $k$. Hence for any initial state $S_0$ with $S_0 \subseteq \bigcup_{n=k}^\infty D_n(v)$, we cannot have a peg on $v$ during the game since value does not increase.

Now we show that our upper bound is best possible. Consider the infinite tree $T$ in Figure \ref{fig2} whose vertices are either marked by a green circle or left unmarked. More explicitly, in each layer of $T$, we mark a vertex if it is the $i$-th vertex, $i$ odd, when counting from the leftmost vertex of its layer. Its topmost vertex $v$ is always reachable. Indeed, if we set our initial state $S_0$ as the set of $2^k$ vertices in layer $k$, where $k \ge 1$, then we can jump with a peg on each unmarked vertex over its adjacent marked vertex so that $S_{2^{k-1}}$ consists of vertices in layer $k-1$. After $\sum_{i = 2}^k 2^{k-i}$ additional similar jumps, the game ends with a peg in $v$.

We claim that $d_n(v) = O(\vp^n)$. Write $D_n = D_n(v)$ and $d_n = d_n(v)$. Partition each set $D_k$ into two sets $M_k$ and $N_k$ consisting of marked and unmarked vertices, respectively. Set $m_k = |M_k|$ and $n_k = |N_k|$.

It is apparent from Figure \ref{fig2} that any given vertex in $M_k$ admits two adjacent vertices not already in $\bigcup_{n=0}^k D_n$, namely a vertex $a$ on its right and a vertex $b$ below it. We can see that $a$ belongs in $N_{k+1}$, while $b$ belongs in $M_{k+1}$. Likewise, any given vertex in $N_k$ admits an adjacent vertex not already in $\bigcup_{n=0}^k D_n$, namely a vertex below it belonging to $M_{k+1}$. This gives us
\[m_{k+1} = m_k + n_k = d_k\]
and
\[n_{k+1} = m_k = d_{k-1}.\]
Hence with initial values $d_0, d_1 = 1$, we have
\[d_{k+2} = m_{k+2} + n_{k+2} = d_{k+1} + d_k.\]
From this recurrence relation, $(d_n)$ is clearly the Fibonacci sequence, which is known to be in $O(\vp^n)$.
\end{proof}

\section{Limit states}\label{sec3}

Suppose that we have an infinite sequence of play $(S_n)_{n < \omega}$ on a graph $G$. We will construct the limit state $S_\omega$ as the set-theoretic limit of $(S_n)_{n < \omega}$. Write $\ol{S_n} = V(G) \setminus S_n$. Provided that every vertex is involved in finitely many jumps during the game, we define $S_\omega$ as the state consisting of vertices $v$ such that $v \in S_n$ for all sufficiently large $n$. Otherwise, $S_\omega$ is undefined. When $S_\omega$ is defined, its complement $\ol{S_\omega}$ consists of vertices $v$ with $v \in \ol{S_n}$ for all but finitely many $n$.

Our approach here is somewhat different to the one from Tatham and Taylor \cite{tatham}. According to their ruleset, the player makes a jump at instants of real-valued time. For example, the player may move at well-ordered times $0, \frac{1}{2}, \frac{2}{3}, \frac{3}{4}, \dots$ so that at time $t = 1$, the state of the game coincides with our formulation of $S_\omega$. However, the player may also move at non-well-ordered times $\dots, \frac{1}{4}, \frac{1}{3}, \frac{1}{2}, 1$, which means that there is no ``first move'' for that particular game when the initial state is recorded at $t =  0$. In our formulation, there is always a clear first jump, which produces a somewhat weaker game than that of Tatham and Taylor. However, since our set of states is well-ordered, the game becomes comparatively more straightforward.

We can use transfinite recursion to construct $S_\alpha$ for an arbitrary ordinal $\alpha$, whenever further play is possible. For every ordinal $\alpha$, the state $S_{\alpha + 1}$ can be obtained by performing a peg jump, denoted by $j_\alpha$, on $S_{\alpha}$. The state $S_\beta$, where $\beta$ is a limit ordinal, is the set of vertices $v$ such that $v \in S_{\lambda}$ for all sufficiently large $\lambda < \beta$, assuming that every vertex is involved in finitely many jumps. Focusing on countable graphs, we restrict our study to states indexed by a countable ordinal. The following theorem confirms that a game having states up to $S_\alpha$ with $\alpha$ countable can be simulated by a game with states up to $S_{\omega}$. Therefore, extending a game beyond $S_\omega$ is not a strict necessity.

\begin{theorem}\label{thm2}
Let $\alpha$ be a countable ordinal with $\alpha \ge \omega$, and suppose that $(S_\beta)_{\beta \le \alpha}$ is a sequence of defined game states on a graph $G$. There exists a sequence $(S'_n)_{n \le \omega}$ of game states on $G$ such that $S'_0 = S_0$ and $S'_\omega = S_{\alpha}$.
\end{theorem}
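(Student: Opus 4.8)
The plan is to recast the statement as a re-indexing problem. A game through $S_\alpha$ is completely determined by its sequence of jumps $(j_\beta)_{\beta<\alpha}$, and since $\alpha$ is a countable ordinal there are only countably many of these jumps, so in principle they can be re-enumerated in order type $\omega$. Before anything else I would record the consequence of the hypotheses that I will actually use: \emph{every vertex of $G$ participates in only finitely many jumps of the whole game}. Indeed, writing $\alpha=\lambda+m$ with $\lambda$ a limit ordinal and $m<\omega$ (possible since $\alpha\ge\omega$), the state $S_\lambda$ is defined, so by the very definition of a limit state each vertex lies in finitely many of the jumps $j_\beta$ with $\beta<\lambda$; adjoining the $m$ remaining jumps keeps the count finite.

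The point is then to choose the re-enumeration carefully. Introduce the binary relation $\prec$ on the index set $\alpha$ by declaring $\beta\prec\gamma$ whenever $\beta<\gamma$ and the jumps $j_\beta$ and $j_\gamma$ share a vertex. Each index $\gamma$ has only finitely many $\prec$-predecessors, because $j_\gamma$ uses three vertices and each vertex occurs in finitely many earlier jumps; moreover every $\prec$-descending chain strictly decreases the ordinal index and so is finite. Hence the tree of predecessors of any $\gamma$ under the transitive closure of $\prec$ is finitely branching and well-founded, and therefore finite. A standard topological-sorting argument then re-enumerates $\alpha$ as a sequence $\gamma_0,\gamma_1,\gamma_2,\dots$ (the finiteness of predecessor sets forces its order type to be at most $\omega$, and $\alpha\ge\omega$ forces it to be exactly $\omega$) such that $\beta\prec\gamma$ implies $\beta$ precedes $\gamma$ in the enumeration. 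I would then set $S'_0=S_0$, let $S'_{n+1}$ be the result of applying $j_{\gamma_n}$ to $S'_n$, and let $S'_\omega$ be the limit state of $(S'_n)_{n<\omega}$ — which exists because this new game uses exactly the same jumps as the old one, so again each vertex occurs in only finitely many of them.

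The heart of the proof is to verify, by induction on $n$, that this is a legal game tracking the original one vertex by vertex. The invariant I would carry is: for every vertex $x$, the jumps among $j_{\gamma_0},\dots,j_{\gamma_{n-1}}$ that touch $x$ form an initial segment of the list of all jumps of the original game that touch $x$ (this uses $\prec$-compatibility, since any two jumps touching $x$ are $\prec$-comparable), and the status of $x$ in $S'_n$ equals the status of $x$ in the original game immediately after its most recent touch so far — the initial status $[x\in S_0]$ if there has been none, and the status of $x$ in $S_\alpha$ once all of its (finitely many) touches have been performed. Granting the invariant at stage $n$, the jump $j_{\gamma_n}=u\cdot\ora{v}\cdot w$ is legal at $S'_n$: by the initial-segment part it is the \emph{next} touch of each of $u,v,w$, so each of $u,v,w$ has in $S'_n$ exactly the status it had in the original game just before $j_{\gamma_n}$ was performed there, namely a peg on $u$ and on $v$ and an empty $w$; performing the jump then restores the invariant at stage $n+1$. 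Finally, for each vertex $x$ and every $n$ large enough that all original jumps touching $x$ already occur among $j_{\gamma_0},\dots,j_{\gamma_{n-1}}$, the invariant gives $x\in S'_n$ iff $x\in S_\alpha$; hence $S'_\omega=S_\alpha$, which together with $S'_0=S_0$ is the assertion.

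I expect the main obstacle to be the finite-predecessor property that legitimizes the order-type-$\omega$ re-enumeration: this is precisely the step where the standing assumption that each vertex is involved in only finitely many jumps is indispensable, and it has to be paired with the (routine but slightly fiddly) observation that respecting $\prec$ preserves, vertex by vertex, the relative order in which jumps act on that vertex — which is exactly what keeps every re-ordered jump legal. Everything else is bookkeeping.
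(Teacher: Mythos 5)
Your proposal is correct, and it reaches the theorem by a genuinely different organization than the paper, although both arguments rest on the same key fact that every vertex, and hence every jump, interacts with only finitely many earlier jumps. The paper writes $\alpha = \omega\xi + r$ and splits the work in two: Lemma \ref{lem1} disposes of the finite tail of $r$ jumps by commuting each past the cofinitely many jumps independent of it, and Lemma \ref{lem2} treats the limit case $\alpha=\omega\xi$ by explicitly constructing a bijection $n\mapsto\omega k_n+\ell_n$ --- a greedy schedule driven by a fairness sequence $(a_n)$ visiting every block infinitely often, which schedules a jump only once its finite dependency set $\mathcal{D}_\beta$ has been scheduled --- and then proves surjectivity by a minimal-counterexample argument. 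You avoid both the decomposition and the explicit construction: you note that the dependency relation $\prec$ has finite transitive ancestry (finitely many immediate predecessors, plus well-foundedness of the ordinal order, hence finiteness by K\H{o}nig's lemma) and invoke an abstract order-type-$\omega$ topological sort, which handles every countable $\alpha\ge\omega$ uniformly. Your route buys uniformity and a more complete correctness check: the vertex-by-vertex invariant (the performed jumps touching $x$ form an initial segment of the original touches of $x$, and the status of $x$ matches its status after its last touch) is exactly the verification the paper compresses into ``it is not hard to infer that $S'_\omega=S_\alpha$''; the paper's route buys a concrete enumeration whose bijectivity is checked directly, at the cost of the case split. The one step you leave implicit --- that a vertex's membership in $S_\beta$ is determined by its most recent touch before $\beta$, including across limit stages --- is a routine transfinite induction and is taken for granted at the same level of detail in the paper, so it is not a gap.
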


\begin{proof}
Two jumps are said to be \emph{disjoint} if there exists no vertex that is involved in both jumps; otherwise, they \emph{overlap}. For any fixed jump $j_\beta$ on $S_\beta$, where $\beta < \alpha$, we define
\[O_\beta = \{\lambda < \beta: j_\lambda \text{ and } j_\beta \text{ overlap}\}.\]
Take a sequence $(a_n)_{n < \omega}$, where $0 \le a_n < \alpha$ for every $n$, such that every ordinal $\beta < \alpha$ appears in the sequence infinitely many times; that is, $\beta = a_n$ for infinitely many $n$. This sequence is ensured to exist by the countability of $\alpha$.

We will construct a sequence $(j'_n)_{n < \omega}$ of jumps that encapsulates the original $(j_\beta)_{\beta < \alpha}$ by recursively defining a bijection $n \mapsto \beta_n$, assigning $j'_n = j_{\beta_n}$ in the process. Denote the sequence of game states produced from $(j'_n)_{n < \omega}$ by $(S')_{n \le \omega}$. For $n < \omega$, let
\[\beta_{<n} = \{\beta_m: m < n\} \quad \text{and} \quad T_n = \{a_n \le \xi < \alpha: \xi \notin \beta_{<n} \text{ and } O_\xi \subseteq \beta_{<n}\}.\]
Then define
\begin{equation}\label{eq3}
    \beta_n = \begin{cases}
    \min{T_n}, & \text{if } T_n \text{ is nonempty}, \\ \min{\{0 \le \xi < \alpha: \xi \notin \beta_{<n}\}}, & \text{if } T_n \text{ is empty}.
    \end{cases}
\end{equation}
Note that $O_{\beta_n} \subseteq \beta_{<n}$ for every $n < \omega$. In other words, jumps $j_\lambda$, where $\lambda < \beta_n$, that overlap with $j_{\beta_n}$ must be such that $\lambda = \beta_m$ for some $m < n$. We thus ensure that all previous jumps that overlap with $j_{\beta_n}$ have been played in the new sequence of jumps $(j'_k)_{k < n}$. This implies that for every $n$, the jump $j_{\beta_n}$ is a legal move to be played at state $S'_n$.

We claim that the map $n \mapsto \beta_n$ is a bijection. Since $\beta_n \notin \beta_{<n}$ for every $n$, we have $\beta_n \neq \beta_m$ for any $m < n$, so the map is injective. We now prove that it is surjective. Suppose to the contrary that there exists a least ordinal $\lambda$ not in the set $\beta_{<\omega} = \{\beta_n: n < \omega\}$. We have $O_\lambda \subseteq \beta_{<\omega}$ by the minimality of $\lambda$. Also, $O_\lambda$ must be finite, since otherwise $S_\lambda$ would be undefined. It follows that there exists some integer $m$ such that $O_\lambda \subseteq \beta_{<m}$. Taking any integer $i \ge m$ such that $a_i = \lambda$, we obtain from equation (\ref{eq3}) that $\beta_i = \lambda$, which contradicts our assumption that $\lambda \notin \beta_{<\omega}$.

We thus see that $(j'_n)$, where $j'_n = j_{\beta_n}$, is a valid sequence of jumps on $G$ which encapsulates the original sequence of jumps $(j_\beta)$. It is not hard to infer that $S'_\omega = S_\alpha$ given $S'_0 = S_0$, finishing our proof.
\end{proof}

\section{Valued graphs}\label{sec4}

Let $G$ be a connected, locally finite graph. We define for all $v \in V(G)$ the pagoda function $\pag_v(u) = \sigma^{d(u,v)}$, from which any state $S$ admits a value given by
\[\val_v(S) = \sum_{u \in S} \pag_v(u) = \sum_{u \in S} \sigma^{d(u,v)}\]
assuming the sum converges. This motivates the following class of graphs.

\begin{definition}
A connected, locally finite graph $G$ is \emph{valued} if for every $v \in V(G)$ and $S \subseteq V(G)$, the sum $\val_v(S) = \sum_{u \in S} \sigma^{d(u,v)}$ converges.
\end{definition}

To show that $G$ is a valued graph, one does not need to check every possible vertex $v$ and state $S$. This is a consequence of the following proposition.

\begin{proposition}\label{prop2}
A connected graph, locally finite $G = (V, E)$ is valued if there exists a vertex $w$ such that $\val_w(V) = \sum_{n = 0}^{\infty} d_n(w)\sigma^n$ converges.
\end{proposition}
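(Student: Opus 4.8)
The plan is to deduce the general convergence from the single hypothesis at $w$ by a triangle-inequality comparison. First I would observe that $0 < \sigma < 1$, so every summand $\sigma^{d(u,v)}$ is positive; hence each series in question converges if and only if it converges absolutely, and it suffices to bound the full sum $\val_v(V)$ for an arbitrary $v \in V$, since $\val_v(S) \le \val_v(V)$ whenever $S \subseteq V$.

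Next, fixing $v$, I would set $D = d(v,w)$, which is a finite nonnegative integer because $G$ is connected — this is the one place the connectedness hypothesis is used. For every $u \in V$ the triangle inequality $d(u,w) \le d(u,v) + d(v,w)$ rearranges to $d(u,v) \ge d(u,w) - D$, and since $x \mapsto \sigma^x$ is decreasing this gives $\sigma^{d(u,v)} \le \sigma^{-D}\,\sigma^{d(u,w)}$. Summing over $u \in V$ then yields
\[\val_v(V) = \sum_{u \in V} \sigma^{d(u,v)} \le \sigma^{-D} \sum_{u \in V} \sigma^{d(u,w)} = \sigma^{-D}\,\val_w(V) < \infty,\]
the finiteness coming straight from the assumption on $w$. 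Therefore $\val_v(S)$ converges for all $v \in V$ and all $S \subseteq V$, so $G$ is valued.

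I do not anticipate any genuine obstacle here: the argument is essentially a one-line comparison, resting on the fact that moving the base vertex from $w$ to $v$ changes every distance by at most the additive constant $d(v,w)$. The only points worth stating explicitly are that nonnegativity of the terms removes any concern about conditional convergence or rearrangement, and that $D = d(v,w)$ is finite precisely because $G$ is assumed connected (so that $w$ and $v$ lie in the same component).
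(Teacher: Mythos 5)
Your proof is correct, and it takes a slightly different (and in fact more streamlined) route than the paper. Both arguments rest on the same core fact — that switching the base vertex from $w$ to $v$ changes every distance by at most the additive constant $k = d(v,w)$ — but you exploit it through a term-by-term domination, $\sigma^{d(u,v)} \le \sigma^{-k}\sigma^{d(u,w)}$ for every single vertex $u$, and then sum once over $V$; since all terms are nonnegative (and $V$ is countable, $G$ being connected and locally finite), this comparison of series is immediate and gives $\val_v(V) \le \sigma^{-k}\val_w(V)$ in one line. The paper instead aggregates vertices into distance layers and compares layer sizes, using $d_n(v) \le \sum_{i=-k}^{k} d_{n+i}(w)$ for $n \ge k$, and then reindexes a double series to conclude convergence. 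Your pointwise comparison buys simplicity: it avoids the layer bookkeeping and the finite exchange/reindexing of sums, and it makes the constant $\sigma^{-d(v,w)}$ explicit. The paper's layer-counting version has the mild advantage of staying entirely in the language of the sequences $(d_n(\cdot))$ used elsewhere in Section 2 (e.g.\ in Proposition 2.3 and Theorem 2.2), but mathematically the two proofs are equally valid and yours has no gap.
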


\begin{proof}
Suppose that $v \in V$ is arbitrary and that $d(v,w) = k$. For $n \ge k$, we have the inequality
\[d_n(v) \le \sum_{i = -k}^k d_{n + i}(w).\]
Since $\sum_{n = 0}^{\infty} d_n(w)\sigma^n$ converges by assumption, $\sum_{n = 0}^{\infty} d_{n + i}(w)\sigma^{n+k}$ also converges for $0 \le i \le 2k$. Hence
\begin{align*}
\val_v(V) &= \sum_{n = 0}^{\infty} d_n(v)\sigma^n \\
&\le \sum_{n = 0}^{k-1} d_n(v)\sigma^n + \sum_{n = k}^{\infty} \sum_{i = -k}^k d_{n + i}(w)\sigma^{n} \\
&= \sum_{n = 0}^{k-1} d_n(v)\sigma^n + \sum_{i = 0}^{2k} \sum_{n = 0}^{\infty} d_{n + i}(w)\sigma^{n+k} \\
&< \infty.
\end{align*}
It directly follows that $\val_v(S)$ converges for every state $S$.
\end{proof}

Given a sequence $(S_n)_{n < \omega}$ of game states on valued graphs, $S_\omega$ is always defined and for every $v \in V(G)$, we have $\val_v(S_n) \to \val_v(S_\omega)$ as $n \to \infty$. To prove this, we first state a limit-sum interchange theorem known as Tannery's theorem. This is precisely Lebesgue's dominated convergence theorem for infinite series.

\begin{lemma}[\cite{bromwich1908}*{p. 123}]\label{lem3}
For every $n \in \N$, let $f_n \colon \N \to \R$ be a function, and suppose that the sequence $(f_n)$ converges pointwise. If there exists a function $g \colon \N \to \R$ such that $|f_n(m)| \le g(m)$ for $n, m \in \N$ and that $\sum_{m = 0}^{\infty} g(m)$ converges, then
\[\lim_{n \to \infty} \sum_{m = 0}^{\infty} f_n(m) = \sum_{m = 0}^\infty \lim_{n \to \infty} f_n(m).\]
\end{lemma}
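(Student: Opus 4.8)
The plan is to run the standard three-epsilons argument, which is exactly Lebesgue's dominated convergence theorem specialized to the counting measure on $\N$, but phrased so as to be self-contained. Write $f(m) = \lim_{n\to\infty} f_n(m)$ for the pointwise limit. First I would dispose of well-definedness: passing the hypothesis $|f_n(m)| \le g(m)$ to the limit in $n$ gives $|f(m)| \le g(m)$ for every $m$, and since $\sum_{m} g(m)$ converges, each series $\sum_m f_n(m)$ and the series $\sum_m f(m)$ converge absolutely, so every quantity in the statement makes sense.

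Next, fix $\eps > 0$. Using convergence of $\sum_m g(m)$, choose $M \in \N$ with $\sum_{m > M} g(m) < \eps/3$. The domination hypothesis then bounds the two tails \emph{uniformly}: for every $n$,
\[
\left| \sum_{m > M} f_n(m) \right| \le \sum_{m > M} g(m) < \frac{\eps}{3},
\qquad
\left| \sum_{m > M} f(m) \right| \le \sum_{m > M} g(m) < \frac{\eps}{3}.
\]
For the head $\sum_{m=0}^{M} f_n(m)$, which is a \emph{finite} sum, pointwise convergence gives $\sum_{m=0}^{M} f_n(m) \to \sum_{m=0}^{M} f(m)$, so there is an $N$ with $\left| \sum_{m=0}^{M} f_n(m) - \sum_{m=0}^{M} f(m) \right| < \eps/3$ for all $n \ge N$.

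Finally, I would split each series at $M$ and apply the triangle inequality: for $n \ge N$,
\[
\left| \sum_{m=0}^{\infty} f_n(m) - \sum_{m=0}^{\infty} f(m) \right|
\le \left| \sum_{m=0}^{M} f_n(m) - \sum_{m=0}^{M} f(m) \right|
+ \left| \sum_{m > M} f_n(m) \right|
+ \left| \sum_{m > M} f(m) \right|
< \eps,
\]
and since $\eps$ was arbitrary this is precisely $\lim_{n\to\infty}\sum_{m} f_n(m) = \sum_{m} f(m)$. There is no genuine obstacle here; the only point requiring a little care is that the domination must be invoked twice — once to make the $n$-dependent tail small uniformly in $n$, and once to control the limit tail $\sum_{m>M} f(m)$ — after which everything is routine bookkeeping. (Alternatively one could simply cite Lebesgue's dominated convergence theorem for the measure space $(\N, 2^{\N}, \text{counting measure})$, but the elementary argument above is short enough to include directly.)
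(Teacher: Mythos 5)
Your argument is correct: the uniform tail bound from the dominating function, the finite-head limit from pointwise convergence, and the three-way triangle inequality split constitute the standard proof of Tannery's theorem, and you correctly invoke domination both for the $n$-dependent tails and for the tail of the limit series (the nonnegativity of $g$, implicit in $|f_n(m)|\le g(m)$, is what makes the comparison and the choice of $M$ legitimate). Note, however, that the paper does not prove this lemma at all; it simply cites it from Bromwich (1908, p.~123) and remarks that it is dominated convergence for series, so your self-contained $\eps/3$ argument supplies exactly the proof the reference contains rather than replacing a different one in the paper.
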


\begin{theorem}\label{thm3}
If $(S_n)_{n < \omega}$ is a sequence of play on a valued graph $G$, then every vertex is involved in finitely many jumps during the game. Consequently, $S_\omega$ is defined. Furthermore,
\begin{equation}\label{eq4}
\val_v(S_\omega) = \lim_{n \to \infty} \val_v(S_n) = \inf_{n \in \N} \val_v(S_n)
\end{equation}
for all $v \in V(G)$.
\end{theorem}

\begin{proof}
Suppose that a vertex $v$ is involved in infinitely many jumps. Each time a peg on $v$ is brought elsewhere by some jump $j_k$, we have $\val_v(S_{k+1}) \le \val_v(S_k) - 1$. Consequently, the value of $S_n$ would be negative for sufficiently large $n$, which is impossible.

We now show that (\ref{eq4}) holds. For $n \le \omega$, define the function $h_n \colon V(G) \to \{0, 1\}$ as
\[h_n(u) = \begin{cases}
1, & \text{if } u \in S_n, \\ 0, & \text{if } u \in \ol{S_n}.
\end{cases}\]
For fixed $u$, note that $h_n(u) \to h_\omega(u)$ as $n \to \infty$. In addition,
\[\lim_{n \to \infty} \val_v(S_n) = \lim_{n \to \infty} \sum_{u \in S_n} \sigma^{d(u,v)} = \lim_{n \to \infty} \sum_{m = 0}^\infty \sum_{u \in D_m(v)} h_n(u)\sigma^m.\]
Setting $f_n(m) = \sum_{u \in D_m(v)} h_n(u)\sigma^m$ and $g(m) = d_m(v)\sigma^m$, we see that the hypotheses of Lemma \ref{lem3} are satisfied, which gives us
\begin{align*}
\lim_{n \to \infty} \sum_{m = 0}^\infty \sum_{u \in D_m(v)} h_n(u)\sigma^m &= \sum_{m = 0}^\infty \left(\lim_{n \to \infty} \sum_{u \in D_m(v)} h_n(u)\sigma^m\right)\\
&= \sum_{m = 0}^\infty \sum_{u \in D_m(v)} \left(\lim_{n \to \infty} h_n(u)\right)\sigma^m\\
&= \sum_{m = 0}^\infty \sum_{u \in D_m(v)} h_\omega(u)\sigma^m\\
&= \val_v(S_\omega).
\end{align*}
This proves the first equality of (\ref{eq4}); its second equality follows since $\val_v(S_n)$ is a non-increasing function of $n$.
\end{proof}

For a non-valued graph $G$, it is possible that $S_\omega = V(G)$ even though $S_0 \subsetneq V(G)$. The tree $T$ in Figure \ref{fig2} of Section \ref{sec2} provides such an example when we start with, say, $\ol{S_0} = \{v\}$. Indeed, if we start from $n = 1$ and then continue onward, layer $n$ in $T$ can be emptied to fill up a previously empty layer $n - 1$, and we will have $S_\omega = V(T)$. However, this can never be the case for valued graphs; it is impossible to fill up the whole graph with pegs when at least one vertex is pegless at the beginning of the game, even with infinitely many moves. This fact directly follows from the implication of Theorem \ref{thm3} that on valued graphs, $\val_v(S_\omega) \le \val_v(S_0)$ for any vertex $v$.

\begin{corollary}\label{cor1}
On a valued graph $G$, it is impossible to have $S_\omega = V(G)$ when $S_0 \subsetneq V(G)$.
\end{corollary}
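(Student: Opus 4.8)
The plan is to derive a contradiction from the monotonicity statement of Theorem \ref{thm3}, namely that $\val_v(S_\omega) \le \val_v(S_0)$ on a valued graph. First I would fix an arbitrary vertex $v \in V(G)$; since $G$ is valued, the pagoda $\pag_v(u) = \sigma^{d(u,v)}$ is well-defined and strictly positive at every vertex, and $\val_v(S)$ converges for every state $S$. Because $S_0 \subset V(G)$ is a \emph{proper} subset, there is some vertex $w \in V(G) \setminus S_0$, and I would record the strict inequality
\[\val_v(V(G)) \;=\; \val_v(S_0) + \sum_{u \in V(G) \setminus S_0} \sigma^{d(u,v)} \;\ge\; \val_v(S_0) + \sigma^{d(w,v)} \;>\; \val_v(S_0),\]
where the rearrangement is justified by absolute (indeed nonnegative-term) convergence guaranteed by the valued hypothesis.

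Next I would suppose toward a contradiction that $S_\omega = V(G)$. By Theorem \ref{thm3}, $S_\omega$ is defined and $\val_v(S_\omega) = \inf_{n} \val_v(S_n) \le \val_v(S_0)$. Combining this with the previous display gives
\[\val_v(V(G)) \;=\; \val_v(S_\omega) \;\le\; \val_v(S_0) \;<\; \val_v(V(G)),\]
which is absurd. Hence $S_\omega \ne V(G)$, which is the claim.

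There is essentially no obstacle here: the statement is a direct corollary of Theorem \ref{thm3}, and the only point requiring a word of care is that the ``extra mass'' $\val_v(V(G)) - \val_v(S_0)$ is genuinely positive — this uses that $S_0$ is a proper subset together with positivity of $\sigma^{d(w,v)}$, the latter being finite precisely because $G$ is connected and locally finite. One could equally phrase the argument contrapositively (if $S_\omega = V(G)$ then necessarily $S_0 = V(G)$), but the contradiction form above is cleanest. For contrast, I would also note in passing, as the surrounding text does, that the valued hypothesis is essential: the tree of Figure \ref{fig2} with $\ol{S_0} = \{v\}$ shows that on a non-valued graph one can have $S_\omega = V(G)$ with $S_0$ proper.
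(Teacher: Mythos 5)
Your proposal is correct and follows the same route as the paper, which derives the corollary directly from the consequence of Theorem \ref{thm3} that $\val_v(S_\omega) \le \val_v(S_0)$ on a valued graph, the point being that a strictly proper $S_0$ has $\val_v(S_0) < \val_v(V(G))$. Your write-up just makes explicit the strict inequality and the positivity of the missing term, which the paper leaves implicit.
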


Corollary \ref{cor1}, along with the fact that every vertex of a valued graph is eventually unreachable, shows that valued graphs form a class of well-behaved graphs in which there is a reasonable set of limitations on what the player is able to accomplish during a game.

\section{Infinite peg solitaire}\label{sec5}

In Section \ref{sec4}, we touched on the possibility of filling up an entire graph with pegs by state $S_\omega$. While this runs counter to the goal of peg solitaire, which is to empty the graph, it corresponds to \emph{fool's solitaire}, whose goal is to end the game with as many pegs on the board as possible when no more jumps can be made. For a selection of papers on fool's solitaire played on graphs, we refer to \cites{beeler2013, loeb2015}. This section and the next are devoted to the game of peg solitaire on infinite graphs.

In their treatment of peg solitaire on finite graphs, Beeler and Hoilman \cite{beeler2011} defined a \emph{solvable graph} as a graph $G$ on which there exists a vertex $v$ and a finite sequence of play $(S_n)_{n = 0}^k$ such that $\ol{S_0} = \{v\}$ and $|S_k| = 1$. If the vertex $v$ can be arbitrarily chosen from $G$, then $G$ is \emph{freely solvable}. For countably infinite graphs, we require an infinite sequence of play $(S_n)_{n \le \omega}$ so that solving the graph becomes attainable, where $S_\omega$ is a singleton in such a case. In addition, it is possible that a game ends with a completely pegless graph given an infinite number of jumps.

\begin{definition}
A countably infinite graph $G$ is \emph{clearable} if there exists a vertex $v$ and a sequence of play $(S_n)_{n \le \omega}$ such that $\ol{S_0} = \{v\}$ and $S_\omega = \emptyset$. The graph $G$ is \emph{freely clearable} if for every vertex $v$, there exists a sequence of play $(S_n)_{n \le \omega}$ such that $\ol{S_0} = \{v\}$ and $S_\omega = \emptyset$.
\end{definition}

Note that it is not necessary to directly end the game with a pegless graph $G$ by state $S_\omega$ to show that $G$ is clearable. As long as $\alpha \ge \omega$ is countable, a game on $G$ with $\left|\ol{S_0}\right| = 1$ and $S_\alpha$ empty implies that $G$ is clearable from Theorem \ref{thm2}.

\begin{figure}[t]
    \centering
    \begin{tikzpicture}[x=1cm, y=1cm]
        \draw[thick] (0, 0) -- (4, 0);
        \draw[thick] (6.5, 0) -- (10.5, 0);
        \draw[thick] (3.25, -2.3) -- (7.25, -2.3);
        \draw[dashed] (4, 0) -- (5, 0);
        \draw[dashed] (10.5, 0) -- (11.5, 0);
        \draw[dashed] (7.25, -2.3) -- (8.25, -2.3);
    
        \draw[fill=black] (0,0) circle (2pt);
        \draw[fill=white] (1,0) circle (2pt);
        \draw[fill=black] (2,0) circle (2pt);
        \draw[fill=black] (3,0) circle (2pt);
        \draw[fill=black] (4,0) circle (2pt);
        
        \draw[fill=black] (6.5,0) circle (2pt);
        \draw[fill=black] (7.5,0) circle (2pt);
        \draw[fill=white] (8.5,0) circle (2pt);
        \draw[fill=black] (9.5,0) circle (2pt);
        \draw[fill=black] (10.5,0) circle (2pt);
        
        \draw[fill=white] (3.25,-2.3) circle (2pt);
        \draw[fill=black] (4.25,-2.3) circle (2pt);
        \draw[fill=black] (5.25,-2.3) circle (2pt);
        \draw[fill=black] (6.25,-2.3) circle (2pt);
        \draw[fill=black] (7.25,-2.3) circle (2pt);
        
        \node at (0,-0.3) {$0$};
        \node at (1,-0.3) {$1$};
        \node at (2,-0.3) {$2$};
        \node at (3,-0.3) {$3$};
        \node at (4,-0.3) {$4$};
        
        \node at (6.5,-0.3) {$0$};
        \node at (7.5,-0.3) {$1$};
        \node at (8.5,-0.3) {$2$};
        \node at (9.5,-0.3) {$3$};
        \node at (10.5,-0.3) {$4$};
        
        \node at (3.25,-2.6) {$0$};
        \node at (4.25,-2.6) {$1$};
        \node at (5.25,-2.6) {$2$};
        \node at (6.25,-2.6) {$3$};
        \node at (7.25,-2.6) {$4$};
        
        \node at (2,-1) {(a) $\ol{S_0} = \{1\}$.};
        \node at (8.5,-1) {(b) $\ol{S_0} = \{2\}$.};
        \node at (5.25,-3.3) {(c) $\ol{S_0} = \{0\}$.};
    \end{tikzpicture}
    
    \caption{Three initial states on $P_\infty$.}
    \label{fig3}
\end{figure}
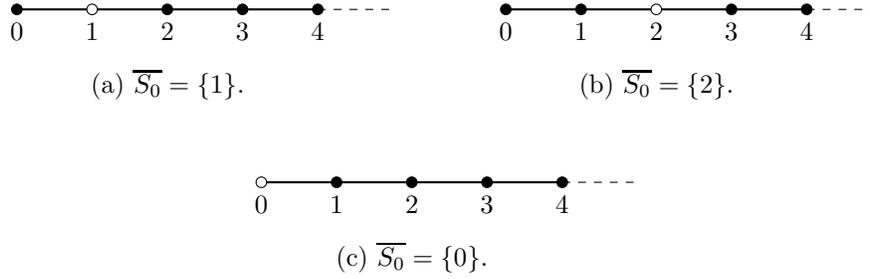

We define the \emph{ray} $P_\infty$ as a graph on $\N$ with edges of the form $\{n, n+1\}$. Likewise, the \emph{double ray} $P_{2\infty}$ is defined as a graph on $\Z$ with edges of the form $\{n, n+1\}$.

\begin{figure}[b]
    \centering
    \begin{tikzpicture}[x=1cm, y=1cm]
        \draw[thick] (-4, 0) -- (4, 0);
        \draw[dashed] (-5, 0) -- (-4, 0);
        \draw[dashed] (4, 0) -- (5, 0);
        
        \draw[fill=black] (-4,0) circle (2pt);
        \draw[fill=black] (-3,0) circle (2pt);
        \draw[fill=white] (-2,0) circle (2pt);
        \draw[fill=white] (-1,0) circle (2pt);
        \draw[fill=black] (0,0) circle (2pt);
        \draw[fill=white] (1,0) circle (2pt);
        \draw[fill=black] (2,0) circle (2pt);
        \draw[fill=black] (3,0) circle (2pt);
        \draw[fill=black] (4,0) circle (2pt);

        \draw[fill=black] (-4,0) circle (2pt);
        \draw[fill=black] (-3,0) circle (2pt);
        \draw[fill=white] (-2,0) circle (2pt);
        \draw[fill=white] (-1,0) circle (2pt);
        \draw[fill=black] (0,0) circle (2pt);
        \draw[fill=white] (1,0) circle (2pt);
        \draw[fill=black] (2,0) circle (2pt);
        \draw[fill=black] (3,0) circle (2pt);
        \draw[fill=black] (4,0) circle (2pt);

        \node at (-4,-0.3) {$-4$};
        \node at (-3,-0.3) {$-3$};
        \node at (-2,-0.3) {$-2$};
        \node at (-1,-0.3) {$-1$};
        \node at (0,-0.3) {$0$};
        \node at (1,-0.3) {$1$};
        \node at (2,-0.3) {$2$};
        \node at (3,-0.3) {$3$};
        \node at (4,-0.3) {$4$};
    \end{tikzpicture}
    
    \caption{$\ol{S_2} = \{-2, -1, 1\}$ on $P_{2\infty}$.}
    \label{fig4}
\end{figure}

\begin{proposition}\label{prop3}
The ray $P_\infty$ is both clearable and solvable, but not freely so.
\end{proposition}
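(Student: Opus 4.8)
The plan is to dispatch the three assertions independently: I will exhibit concrete (possibly $\omega$-length) plays witnessing that $\mathbb N$ is clearable and solvable, and give a forcing argument showing the hole cannot be placed at vertex $0$, which defeats both free properties.

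The engine for the two positive statements is a ``surfer'' gadget. Call the board configuration whose pegs are exactly $\{p\}\cup\{q:q\ge p+2\}$ — vertex $p$ pegged, vertex $p+1$ empty, everything from $p+2$ on pegged, everything below $p$ empty — the \emph{$p$-surfer position}. The key one-line check is that from the $p$-surfer position the two jumps $(p+3)\cdot\ora{p+2}\cdot(p+1)$ and then $p\cdot\ora{p+1}\cdot(p+2)$ return the board to the $(p+2)$-surfer position while permanently emptying vertices $p$ and $p+1$. For clearability I take $v=1$, so that $S_0$ is precisely the $0$-surfer position; iterating the gadget produces a play $(S_n)_{n\le\omega}$ in which each vertex takes part in only finitely many jumps (so $S_\omega$ is genuinely defined) and each vertex has been permanently emptied by some finite stage, whence $S_\omega=\emptyset$.

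For solvability I take $v=2$ and first make the two preparatory jumps $4\cdot\ora 3\cdot 2$ and $1\cdot\ora 2\cdot 3$, which bring the board to pegs exactly on $\{0\}\cup\{3\}\cup\{q:q\ge 5\}$. Now the peg on $0$ can never be removed — removing a peg requires jumping it over a neighbour, and $0$ has only one neighbour in $\mathbb N$ — while the remainder of the board is a $3$-surfer position supported on the vertices $\ge 3$, on which the surfer gadget runs without ever disturbing vertices $0,1,2$. Iterating it leaves every vertex $\ge 1$ eventually empty and vertex $0$ pegged, so $S_\omega=\{0\}$ and $|S_\omega|=1$.

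For the failure of freeness I start from $\ol{S_0}=\{0\}$, i.e.\ pegs everywhere except $0$. The unique legal first jump is $2\cdot\ora 1\cdot 0$, after which the peg on the leaf $0$ is frozen. An induction then shows the board after $n$ jumps is forced to equal $\{0,2,4,\dots,2n-2\}\cup\{2n+1,2n+2,\dots\}$, the only legal move at each stage being $(2k+4)\cdot\ora{2k+3}\cdot(2k+2)$ (verified by listing the empty vertices), so $S_\omega=\{0,2,4,6,\dots\}$ — an infinite set that admits no further jump. Hence no play from $\ol{S_0}=\{0\}$, of length $\omega$ or, by Theorem \ref{thm2}, of any countable length, can empty the board or leave a single peg, so $\mathbb N$ is neither freely clearable nor freely solvable. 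I anticipate no conceptual difficulty; the only real work is the bookkeeping in the two positive constructions — confirming that each vertex is involved in finitely many jumps, so that the limit states are legitimate — and the case analysis showing the displayed jump is the unique legal one at every stage of the forced game.
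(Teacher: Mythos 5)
Your proof is correct, and its overall strategy matches the paper's: clear with the hole at vertex $1$, solve with the hole at vertex $2$ by protecting the leaf $0$, and defeat both free properties by the forced play from the hole at $0$, whose unique move sequence $2\cdot\ora{1}\cdot 0$, $4\cdot\ora{3}\cdot 2$, $6\cdot\ora{5}\cdot 4,\dots$ leaves the infinite dead position $\{0,2,4,\dots\}$. The one genuine difference is in the positive constructions: the paper first sweeps leftward (jumps $(2n+1)\cdot\ora{2n}\cdot(2n-1)$), reaching at $S_\omega$ the state of odd vertices together with $0$, and then spends a second $\omega$-phase marching the peg from $0$ rightward, so its clearing game has length $\omega\cdot 2$ and implicitly leans on Theorem \ref{thm2} to conclude clearability; your ``surfer'' gadget interleaves these two processes into a single $\omega$-length schedule in which each vertex is touched boundedly often and is permanently emptied at a finite stage, so $S_\omega=\emptyset$ directly and no transfinite states are needed for the positive parts. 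Conversely, in the negative part you invoke Theorem \ref{thm2} explicitly to rule out plays of arbitrary countable length, a point the paper leaves implicit. Both arguments are sound; yours is slightly more self-contained on the positive side, the paper's two-phase description is perhaps easier to state. (Minor remark: in your solvability step, the clause ``the peg on $0$ can never be removed'' only rules out its capture; what actually keeps vertex $0$ pegged is, as you also note, that after the two preparatory jumps no move of the surfer ever involves vertices $0,1,2$.)
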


\begin{proof}
Setting $\ol{S_0} = \{1\}$ as in Figure \ref{fig3}(a), we can designate $(2n + 1) \cdot \vv{2n} \cdot (2n - 1)$ as the $n$-th jump of the game, thus setting $S_\omega = \{n: n \text{ odd or zero}\}$. Finally, we have an empty state at $S_{\omega \cdot 2}$ by jumping the peg initially at $0$ two spaces ahead each turn, thus $P_\infty$ is clearable.

If we pick $\ol{S_0} = \{2\}$ as in Figure \ref{fig3}(b), then $\Z_+$ forms a \textit{subray} (that is, a ray that is also a subgraph) $R$ of $P_\infty$, with an initial state identical to Figure \ref{fig3}(a). Working exclusively on $R$, we can empty each vertex except $0$, thus $P_\infty$ is solvable.

On the other hand, if $\ol{S_0} = \{0\}$ as in Figure \ref{fig3}(c), then the only legal move for the $n$-th jump would be $(2n) \cdot \vv{2n - 1} \cdot (2n - 2)$. This leaves infinitely many vertices on $P_\infty$ at $S_\omega$. Consequently, $P_\infty$ is neither freely clearable nor freely solvable.
\end{proof}

\begin{proposition}\label{prop4}
The double ray $P_{2\infty}$ is neither solvable nor clearable.
\end{proposition}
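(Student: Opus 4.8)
By vertex-transitivity of $\Z$ I may assume the initial hole sits at $0$, so $S_0 = \Z \setminus \{0\}$, and by Theorem~\ref{thm2} it is enough to consider plays $(S_n)_{n \le \omega}$. The plan is to prove that $S_\omega$ is always infinite. This yields both assertions at once: $S_\omega \ne \emptyset$, so $\Z$ is not clearable, and $|S_\omega| \ne 1$, so $\Z$ is not solvable (a finite play starting from the cofinite state $S_0$ keeps every state cofinite, so only an infinite play could witness solvability, and Theorem~\ref{thm2} compresses any such play to length $\omega$).

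The structural fact I would use throughout is that a jump $u \cdot \ora{v} \cdot w$ empties only $u$ and $v$, and since $w$ is empty, $v$ is an endpoint of the maximal block of consecutive pegs through it; hence a jump removes only the two pegs at one end of a maximal block, so a block unbounded on one side stays unbounded after every jump. As $S_0 = \{\dots,-2,-1\} \cup \{1,2,\dots\}$, induction shows each $S_n$ (for $n < \omega$) contains a block $\{\dots,\ell_n-1,\ell_n\}$ with $\ell_n := \min \ol{S_n} - 1$ and a block $\{r_n,r_n+1,\dots\}$ with $r_n := \max \ol{S_n} + 1$. I would first note that $(\ell_n)$ cannot tend to $+\infty$: otherwise $\{\dots,M-1,M\} \subseteq S_\omega$ for every $M$, so $S_\omega = \Z$, contradicting Corollary~\ref{cor1} since $\Z$ is valued; symmetrically $(r_n)$ cannot tend to $-\infty$.

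If $(\ell_n)$ is bounded below, say $\ell_n \ge c$ for all $n$, then $\{\dots,c-1,c\} \subseteq S_n$ for all $n$, hence $\{\dots,c-1,c\} \subseteq S_\omega$ and we are done; the case where $(r_n)$ is bounded above is symmetric. Otherwise $\ell_n \to -\infty$ and $r_n \to +\infty$, and I work with the former. A brief inspection of how a legal jump can alter $\min \ol{S_n}$ shows $\ell_{n+1} - \ell_n \in \{-2,0,1\}$, the value $-2$ arising exactly for the ``push-left'' jump $(\ell_n-1) \cdot \ora{\ell_n} \cdot (\ell_n+1)$; thus $\ell_n \to -\infty$ forces infinitely many push-left jumps. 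Each push-left deposits a peg at position $\ell_n+1$ whose left neighbour has just been emptied, so this peg is isolated whenever $\ell_n+2$ is also empty at that instant. The crux is a lemma asserting that, with at most boundedly many exceptions, every push-left occurs in exactly this situation, and that the deposited peg cannot later be removed or displaced without leaving a fresh isolated peg behind; one proves this by tracking the local pattern at the right end of the left block — whether it abuts one, or at least two, empty vertices — and checking that the ``single empty vertex'' pattern, once broken, can be restored only by a jump that itself isolates a peg. Amortizing the charges, infinitely many push-lefts produce infinitely many pegs surviving into $S_\omega$, so $S_\omega$ is infinite and the proof is complete.

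I expect the main obstacle to be precisely that last lemma: the configuration-by-configuration analysis near the receding endpoint of an infinite block, together with the bookkeeping showing that such a block cannot recede to infinity without permanently trapping infinitely many pegs. The reductions, the ``blocks stay unbounded'' observation, the use of Corollary~\ref{cor1}, and the determination of the possible steps $\ell_{n+1} - \ell_n$ are all routine.
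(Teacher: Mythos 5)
Your reductions are fine and are genuinely different in flavour from the paper's: the paper observes that, up to isomorphism, the position after two jumps is forced (Figure \ref{fig4}) and then case-analyzes the three ``junction'' jumps $3 \cdot \ora{2} \cdot 1$, $0 \cdot \ora{1} \cdot 2$, $1 \cdot \ora{0} \cdot (-1)$, showing in each case that one half-line keeps infinitely many pegs; you instead track the right endpoint $\ell_n$ of the left-infinite block, correctly determine the possible steps $\ell_{n+1}-\ell_n \in \{-2,0,1\}$, dispose of the easy cases via Corollary \ref{cor1} and boundedness, and reduce everything to: infinitely many push-left jumps force infinitely many pegs to survive into $S_\omega$.

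That last statement, however, is exactly where the entire difficulty of the proposition lives, and you have not proved it --- you yourself label it ``the crux'' and ``the main obstacle'' and only sketch a strategy. The sketch as stated is not yet an argument. A peg deposited at $\ell_n+1$ that is isolated at the instant of deposit is not thereby permanent: its neighbours can be refilled later by jumps arriving from the right (e.g.\ $p+3 \cdot \ora{p+2} \cdot (p+1)$ next to an isolated peg at $p$), after which the peg can be jumped away; so the whole weight falls on the amortization claim that every such removal ``leaves a fresh isolated peg behind,'' which needs a precise invariant and a case check, and must also ensure the charges land on \emph{distinct} pegs that are eventually fixed, so that they actually lie in $S_\omega$ (here Theorem \ref{thm3} would be needed to stabilize them). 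Likewise the assertion that ``with at most boundedly many exceptions'' every push-left happens with $\ell_n+2$ empty is unjustified: $+1$ steps and jumps landing at $\ell_n+2$ from the right can recreate the exceptional local pattern, potentially infinitely often, and interaction with the right half (which in your unbounded case is itself receding and shedding pegs) is precisely what a correct proof has to control. So the proposal is a plausible programme with the key lemma missing, not a proof; to complete it you must formulate and verify the local invariant at the receding block end, or else fall back on an argument like the paper's, which confines all left--right interaction to the three junction jumps of the forced position and handles each case directly.
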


\begin{proof}
Consider an infinite game on $P_{2\infty}$ with $\ol{S_0}$ a singleton. We prove that $S_\omega$ is infinite. The graph $P_{2\infty}$ at $S_2$ is, up to isomorphism, illustrated in Figure \ref{fig4}. If we never make the jump $3 \cdot \vv{2} \cdot 1$ after this state, then $S_\omega$ will be infinite. Hence suppose that $S_k$, where $k > 2$, is the state immediately after $3 \cdot \vv{2} \cdot 1$ is performed. We have $0, 1 \in S_k$. Notice that if neither $0 \cdot \vv{1} \cdot 2$ nor $1 \cdot \vv{0} \cdot (-1)$ is performed after $S_k$, then $S_\omega$ will be infinite. However, if $0 \cdot \vv{1} \cdot 2$ is performed, then $\Z_-$ necessarily contains infinitely many pegs at $S_\omega$. Likewise, $\Z_+$ will contain infinitely many pegs at $S_\omega$ if $1 \cdot \vv{0} \cdot (-1)$ is performed instead. In any case, $S_\omega$ must be infinite, so $P_{2\infty}$ is neither solvable nor clearable.
\end{proof}

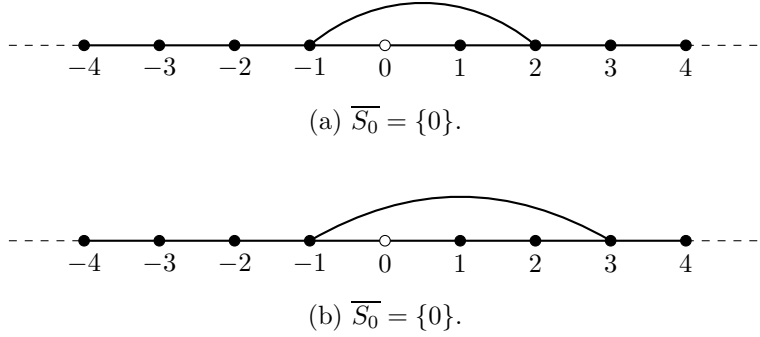
\begin{figure}[t]
    \centering
    \begin{tikzpicture}[x=1cm, y=1cm]
        \draw[thick] (-1, -2.6) to [bend left=40] (2, -2.6);
        \draw[thick] (-4, -2.6) -- (4, -2.6);
        \draw[dashed] (-5, -2.6) -- (-4, -2.6);
        \draw[dashed] (4, -2.6) -- (5, -2.6);
        
        \draw[thick] (-1, -5.2) to [bend left=30] (3, -5.2);
        \draw[thick] (-4, -5.2) -- (4, -5.2);
        \draw[dashed] (-5, -5.2) -- (-4, -5.2);
        \draw[dashed] (4, -5.2) -- (5, -5.2);

        \draw[fill=black] (-4,-2.6) circle (2pt);
        \draw[fill=black] (-3,-2.6) circle (2pt);
        \draw[fill=black] (-2,-2.6) circle (2pt);
        \draw[fill=black] (-1,-2.6) circle (2pt);
        \draw[fill=white] (0,-2.6) circle (2pt);
        \draw[fill=black] (1,-2.6) circle (2pt);
        \draw[fill=black] (2,-2.6) circle (2pt);
        \draw[fill=black] (3,-2.6) circle (2pt);
        \draw[fill=black] (4,-2.6) circle (2pt);
        
        \draw[fill=black] (-4,-5.2) circle (2pt);
        \draw[fill=black] (-3,-5.2) circle (2pt);
        \draw[fill=black] (-2,-5.2) circle (2pt);
        \draw[fill=black] (-1,-5.2) circle (2pt);
        \draw[fill=white] (0,-5.2) circle (2pt);
        \draw[fill=black] (1,-5.2) circle (2pt);
        \draw[fill=black] (2,-5.2) circle (2pt);
        \draw[fill=black] (3,-5.2) circle (2pt);
        \draw[fill=black] (4,-5.2) circle (2pt);

        \node at (-4,-2.9) {$-4$};
        \node at (-3,-2.9) {$-3$};
        \node at (-2,-2.9) {$-2$};
        \node at (-1,-2.9) {$-1$};
        \node at (0,-2.9) {$0$};
        \node at (1,-2.9) {$1$};
        \node at (2,-2.9) {$2$};
        \node at (3,-2.9) {$3$};
        \node at (4,-2.9) {$4$};

        \node at (-4,-5.5) {$-4$};
        \node at (-3,-5.5) {$-3$};
        \node at (-2,-5.5) {$-2$};
        \node at (-1,-5.5) {$-1$};
        \node at (0,-5.5) {$0$};
        \node at (1,-5.5) {$1$};
        \node at (2,-5.5) {$2$};
        \node at (3,-5.5) {$3$};
        \node at (4,-5.5) {$4$};
        
        \node at (0,-3.6) {(a) $\ol{S_0} = \{0\}$.};
        \node at (0,-6.2) {(b) $\ol{S_0} = \{0\}$.};
    \end{tikzpicture}
    
    \caption{Game start on a couple of two-ended graphs on the integers.}
    \label{fig5}
\end{figure}

By inserting the edge $\{-1, 2\}$ to $P_{2\infty}$, we can obtain a clearable graph. Indeed, starting with the configuration displayed in Figure \ref{fig5}(a), we can clear the graph by performing the jump $2 \cdot \vv{-1} \cdot 0$ before emptying $\Z_+$ and $\Z_{\le 0}$ independently. From this observation, we can obtain a freely clearable graph by adding edges of the form $\{n, n+3\}$ to $P_{2\infty}$. Likewise, by inserting the edge $\{-1, 3\}$ to $P_{2\infty}$, we obtain a solvable graph since we can clear all vertices other than $1$. Therefore, adding edges of the form $\{n, n+4\}$ to $P_{2\infty}$ results in a freely solvable graph. We remark that the concept of adding edges to obtain solvable graphs was considered in detail for finite graphs in \cites{beeler2016, de2020, de2022}.

Theorem \ref{thm4} implies that an infinite rayless graph is neither solvable nor clearable. The proof makes use of the K\H{o}nig's infinity lemma (Lemma \ref{lem4}) and a simple result on \textit{one-way infinite walks}, which are defined as a sequence of vertices $w_0w_1 \dots$ where $w_i$ is adjacent to $w_{i-1}$ for all $i \ge 1$.

\begin{lemma}[\cite{diestel2017}*{Lemma 8.1.2}]\label{lem4}
Let $(V_n)_{n = 0}^\infty$ be an infinite sequence of disjoint nonempty finite sets, and let $H$ be a graph on $\bigcup_{n = 0}^\infty V_n$. If every vertex in $V_{n+1}$ has a neighbor in $V_n$, then $H$ contains a ray $v_0v_1\dots$ such that $v_n \in V_n$ for $n \in \N$.
\end{lemma}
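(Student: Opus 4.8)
The plan is to derive Lemma~\ref{lem4} from the rooted-tree form of K\H{o}nig's lemma, which I will prove by a greedy descent.

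First I would impose a forest structure on $H$ that respects the layering. For every $m \ge 1$ and every vertex $v \in V_m$, the hypothesis supplies a neighbor of $v$ in $V_{m-1}$; fix one such neighbor $p(v)$ and call it the \emph{parent} of $v$, while the vertices of $V_0$ are declared roots. Every edge $\{v, p(v)\}$ lies in $H$, and iterating $p$ assigns to each vertex a unique root in $V_0$; hence $F := \bigcup_m V_m$ together with the parent edges is a disjoint union of $|V_0|$ rooted trees, and in each of them a vertex of $V_m$ has depth exactly $m$.

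Next comes a pigeonhole step. The union $\bigcup_m V_m$ is infinite, being a union of infinitely many pairwise disjoint nonempty sets, whereas $V_0$ is finite; therefore at least one of the $|V_0|$ trees, say $T$ rooted at $r_0 \in V_0$, has infinitely many vertices. The depth-$m$ vertices of $T$ form a subset of the finite set $V_m$, so every level of $T$ is finite; in particular every vertex of $T$ has only finitely many children. The main step is then the greedy construction of the ray inside $T$: suppose $r_0, \dots, r_n$ have been chosen so that $r_i$ has depth $i$ in $T$ and the subtree of $T$ rooted at $r_i$ is infinite. Since $r_n$ has finitely many children and its subtree is the union of $\{r_n\}$ with the subtrees rooted at those children, one of the children --- call it $r_{n+1}$ --- must itself root an infinite subtree. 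This produces an infinite sequence $r_0 r_1 r_2 \cdots$ with $r_m$ at depth $m$ in $T$, hence $r_m \in V_m$; consecutive vertices $r_m, r_{m+1}$ are joined by a parent edge and so are adjacent in $H$; and the $r_m$ are pairwise distinct because they lie in pairwise disjoint sets. Thus $r_0 r_1 r_2 \cdots$ is a ray of $H$ meeting each $V_m$ in $r_m$, which is what the lemma asserts.

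There is no serious obstacle here; the only points requiring care are that the choice of parents uses precisely the stated hypothesis, that ``infinitely many disjoint nonempty sets'' together with $|V_0| < \infty$ forces one of the finitely many trees to be infinite, and that the greedily selected path is automatically a ray because successive vertices sit on distinct layers. One could bypass the auxiliary forest and run the descent directly on $H$ using the sets of vertices of $V_m$ that lie on level-respecting paths of unbounded length, but the forest packaging keeps the bookkeeping cleanest.
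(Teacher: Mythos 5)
Your argument is correct: fixing a parent $p(v)\in V_{m-1}$ for each $v\in V_m$, extracting by pigeonhole an infinite tree rooted in the finite set $V_0$, and descending greedily through children whose subtrees remain infinite is a complete proof, and the levelling guarantees $r_m\in V_m$ with consecutive vertices adjacent and all vertices distinct. Note, however, that the paper offers no proof of this statement at all --- it is K\H{o}nig's infinity lemma, imported verbatim from Diestel (Lemma~8.1.2) --- so there is nothing internal to compare against; your forest-plus-greedy-descent argument is essentially the standard proof found in that source, just packaged with explicit parent functions rather than counting, for each candidate vertex, the infinitely many vertices whose descending paths pass through it.
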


\begin{lemma}\label{lem5}
Let $W = w_0w_1\dots$ be a one-way infinite walk on a graph $G$. If no vertex is repeated infinitely many times in $W$, then $G$ contains a subray.
\end{lemma}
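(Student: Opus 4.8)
The plan is to extract from the walk $W = w_0w_1\dots$ a subsequence of vertices that forms an honest ray (no repetitions) by a greedy/recursive selection, using the hypothesis to guarantee the selection never gets stuck. First I would introduce the set $R = \{v \in V(G) : v = w_i \text{ for infinitely many } i\}$ of vertices visited infinitely often, and its complement among visited vertices, the set $F$ of vertices visited only finitely often. The key structural observation is that since each $v \in F$ appears only finitely many times, there is an index $N$ beyond which the walk never again visits any vertex of $F$ that has appeared so far — but $F$ may be infinite, so one cannot pick a single such $N$ uniformly. This is the main obstacle, and it forces a case split.

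Case 1: $R = \emptyset$, i.e.\ no vertex is repeated infinitely often (this is essentially the clean case the hypothesis is built for). Then I would build the ray greedily: set $v_0 = w_0$; having chosen $v_0, \dots, v_{k-1}$ together with indices $i_0 < i_1 < \dots < i_{k-1}$ with $v_j = w_{i_j}$ and all $v_j$ distinct, note that the finitely many vertices $v_0, \dots, v_{k-1}$ are each visited only finitely often (as $R=\emptyset$), so there is an index $M > i_{k-1}$ past which none of them recurs; also $w_{M}, w_{M+1}$ are adjacent in $W$ hence in $G$, and since the walk beyond $M$ must eventually leave the finite set $\{v_0,\dots,v_{k-1}\} \cup \{w_M\}$ — again using that $w_M$ itself recurs only finitely often — I can pick an index $i_k > M$ with $w_{i_k} \notin \{v_0, \dots, v_{k-1}\}$ and such that $w_{i_k - 1} \in \{v_0,\dots,v_{k-1}\} \cup \{w_j : M \le j < i_k\}$; tracking the intermediate walk vertices lets me splice in a finite path and relabel so that consecutive chosen vertices are adjacent. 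Carrying this out carefully yields a ray $v_0v_1\dots$ in $G$, all of whose vertices are distinct, which is the desired subray. (This argument is essentially the standard fact that an infinite walk with no infinitely-repeated vertex contains a ray; I would phrase the bookkeeping of "splice the finite connecting subwalk and discard later repeats" as the core of the construction.)

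Case 2: $R \neq \emptyset$. I would first argue this case cannot actually arise under the hypothesis if $R$ is required to be finite, but since the statement only forbids a single vertex being repeated infinitely often, we could have $|R| \ge 2$; in that subcase pick $u, u' \in R$ distinct. Since both are visited infinitely often, the walk travels from $u$ to $u'$ infinitely many times; extracting one such excursion $w_p = u, w_{p+1}, \dots, w_q = u'$ and then, reducing it to a path (delete closed subwalks between repeated vertices, a finite operation), gives a finite $u$--$u'$ path. Doing this for infinitely many disjoint time-windows, and applying K\H{o}nig's infinity lemma (Lemma \ref{lem4}) with $V_n$ built from the vertices reached during the $n$-th excursion — or more directly, observing that infinitely many disjoint $u$--$u'$ excursions whose internal vertex sets stabilize force an infinite path — produces a ray. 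Alternatively, and more cleanly: if $|R| \ge 2$ then $G$ contains a cycle or an infinite collection of internally disjoint $u$--$u'$ paths, either of which yields a ray; and if $|R| = 1$, say $R = \{u\}$, then all but finitely many steps of $W$ lie in the finite-visit set $F \cup \{u\}$, and since $u$ is a cut-like bottleneck visited infinitely often while infinitely many distinct vertices of $F$ are visited, one can select from successive returns to $u$ an infinite sequence of ever-new vertices, again assembling a ray via Lemma \ref{lem4}. I expect the $|R| = 1$ sub-case to require the most care, as it is where "no vertex repeated infinitely often" fails most narrowly, and I would handle it by a direct infinity-lemma argument: partition the time axis at successive visits to $u$, let $V_n$ be the (nonempty, finite) set of vertices visited strictly between the $n$-th and $(n{+}1)$-st return to $u$ for those $n$ where this set contains a vertex not visited before, and check the neighbor condition so that Lemma \ref{lem4} applies.

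In all cases we obtain a ray in $G$, i.e.\ a subray, completing the proof. The single genuine difficulty, as noted, is the possibility of infinitely many finitely-visited vertices, which blocks a one-shot truncation of $W$ and is what necessitates either the greedy construction of Case 1 or the infinity-lemma packaging of Case 2.
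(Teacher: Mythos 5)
Your Case~2 rests on a misreading of the hypothesis: ``no vertex is repeated infinitely many times'' means precisely that \emph{every} vertex occurs only finitely often in $W$, i.e.\ your set $R$ is empty by assumption, so the whole case is vacuous. That is fortunate, because several of its assertions are false as stated: for a triangle traversed forever, every vertex lies in $R$, the graph contains a cycle and infinitely many $u$--$u'$ excursions, yet there is no ray. So nothing in Case~2 can be salvaged as a proof technique, and nothing in it is needed.

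The real content is Case~1, and there the extension step is not pinned down in a way that works. Having chosen $M$ past the last occurrences of $v_0,\dots,v_{k-1}$, your side condition on $i_k$ is vacuous (for any $i_k>M$ the vertex $w_{i_k-1}$ automatically lies in $\{w_j : M\le j<i_k\}$), and the segment you would splice between $v_{k-1}$ (sitting at index $i_{k-1}$) and $w_{i_k}$ necessarily runs through indices below $M$, where earlier chosen vertices may still recur. Concretely, for the walk $a,b,c,b,d,e,\dots$ (with $a,b,c,d$ never visited again), after building $a,b,c$ the connecting segment $c,b,d,e$ revisits $b$, so ``discarding later repeats'' backtracks into the already-built prefix; you give no stabilization argument showing this process converges to a ray, and that bookkeeping is the entire content of the lemma. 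The gap closes at once with a sharper greedy choice, which is the paper's proof: set $r_0=w_0$ and $r_{n+1}=w_{a_n}$, where $a_n-1$ is the \emph{last} index at which $r_n$ occurs (well defined since every vertex occurs finitely often). Then $r_n$ and $r_{n+1}$ are adjacent because they are consecutive on $W$, and the $r_n$ are pairwise distinct because their last-occurrence indices strictly increase; no splicing, relabeling, or case analysis is required.
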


\begin{proof}
We can construct a ray $r_0r_1\dots$ on $G$ with $r_0 = w_0$ and $r_{n+1} = w_{a_n}$ for $n \in \N$, where
\[a_n = \max_{w_k = r_n} k + 1.\qedhere\]
\end{proof}

\begin{theorem}\label{thm4}
If $(S_n)_{n < \omega}$ is a sequence of game states on an infinite rayless graph $G$, with $\ol{S_0}$ a nonempty finite set, then $S_\omega$ is undefined.
\end{theorem}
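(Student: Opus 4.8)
The plan is to argue by contradiction. Since $(S_n)_{n < \omega}$ describes a genuine infinite play, infinitely many jumps $j_0, j_1, j_2, \dots$ take place; write $j_n = a_n \cdot \ora{b_n} \cdot c_n$, and suppose toward a contradiction that $S_\omega$ is defined, i.e.\ every vertex of $G$ is involved in only finitely many jumps. The key construction is a rooted tree $T$ whose vertex set is $\{j_n : n < \omega\}$. Because $\ol{S_0} = \{v\}$, the vertex $v$ is the only empty vertex at the start, so $c_0 = v$. For $n \ge 1$ the target $c_n$ is empty just before $j_n$: if $c_n \ne v$ then $c_n$ held a peg at time $0$, and if $c_n = v$ then $c_n$ held a peg just after $j_0$; in either case some jump with index $< n$ emptied $c_n$. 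Define $\mathrm{par}(n) = \max\{m < n : c_n \in \{a_m, b_m\}\}$ for $n \ge 1$, the most recent jump to have emptied $c_n$; then $\mathrm{par}(n) < n$. Iterating $\mathrm{par}$ strictly decreases the index and so terminates, necessarily at $j_0$, so declaring $j_{\mathrm{par}(n)}$ the parent of $j_n$ makes $\{j_n : n < \omega\}$ an infinite tree $T$ rooted at $j_0$.

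Next I would check that $T$ is finitely branching: every child $j_n$ of $j_m$ has $c_n \in \{a_m, b_m\}$, and since $a_m$ and $b_m$ are each involved in only finitely many jumps, $j_m$ has only finitely many children. Applying K\H{o}nig's infinity lemma (Lemma \ref{lem4}) to the levels of $T$ — the set of jumps at depth $n$ is finite and, since $T$ is infinite, nonempty — yields a ray $j_{m_0} j_{m_1} j_{m_2} \cdots$ in $T$ with $j_{m_0} = j_0$ and $j_{m_{i+1}}$ a child of $j_{m_i}$, so that $c_{m_{i+1}} \in \{a_{m_i}, b_{m_i}\}$ for every $i$.

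I would then convert this tree-ray into a one-way infinite walk in $G$. For each $i$ the three distinct vertices $a_{m_i}, b_{m_i}, c_{m_i}$ form the path $a_{m_i} b_{m_i} c_{m_i}$, so $c_{m_{i+1}}$ is joined to $c_{m_i}$ either by the edge $c_{m_i} b_{m_i}$ (when $c_{m_{i+1}} = b_{m_i}$) or by the path $c_{m_i} b_{m_i} a_{m_i}$ (when $c_{m_{i+1}} = a_{m_i}$); in either case the short path has length $\ge 1$. Concatenating these short paths gives a one-way infinite walk $W = w_0 w_1 w_2 \cdots$ with $w_0 = c_{m_0} = v$, each of whose vertices is of the form $c_{m_i}$ or $b_{m_i}$. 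As $G$ is rayless, Lemma \ref{lem5} rules out the possibility that every vertex occurs only finitely often in $W$, so some vertex $u$ occurs in $W$ infinitely often. Hence $u = c_{m_i}$ for infinitely many $i$, or $u = b_{m_i}$ for infinitely many $i$; since $i \mapsto m_i$ is injective, in either case $u$ lies in infinitely many distinct jumps $j_{m_i}$, contradicting the standing assumption. Therefore $S_\omega$ is undefined.

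The step I expect to demand the most care is setting up the tree $T$ correctly: verifying that every jump after $j_0$ genuinely has a well-defined earlier parent (which is where the singleton hypothesis on $\ol{S_0}$ is used, together with the observation that $v$, once filled by $j_0$, behaves like any other vertex), and that the branching of $T$ is finite (which is exactly where the assumption that $S_\omega$ is defined gets consumed). Once $T$ is in place, the passage to the walk $W$ and the appeals to Lemmas \ref{lem4} and \ref{lem5} are routine.
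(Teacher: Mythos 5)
Your proof is correct, but it reaches the contradiction through a different auxiliary construction than the paper's. Both arguments share the same skeleton: use K\H{o}nig's lemma (Lemma \ref{lem4}) to extract an infinite ray in an auxiliary structure, convert it into a one-way infinite walk in $G$, and then play Lemma \ref{lem5} against the finite-involvement property to contradict raylessness. The paper, however, applies K\H{o}nig's lemma to time-layered sets $V_n \subseteq \ol{S_n}$ of empty vertices that are ``not fixed after $S_n$''; finiteness of the layers comes from the singleton hypothesis via $|\ol{S_n}| = n+1$, and the assumption that $S_\omega$ is defined is only invoked at the very end, to rule out infinitely repeated vertices in the walk. You instead apply K\H{o}nig's lemma to a dependency tree on the jumps themselves, where the parent of $j_n$ is the most recent earlier jump that emptied its landing vertex $c_n$; here the singleton hypothesis is what guarantees every landing vertex was previously occupied (so parents exist and the tree is rooted at $j_0$), while the definedness of $S_\omega$ is consumed earlier, to make the tree finitely branching. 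Your route makes the causal structure of the play explicit and yields a walk that visibly traces, starting from $v$, how emptiness propagates through the board, at the cost of needing the slightly more delicate verifications you flag (well-defined parents, nonempty finite levels, strictly increasing indices along the tree ray); the paper's layered construction avoids reasoning about which jump emptied which vertex and keeps the contradiction hypothesis localized to the final step. Both are complete proofs, and your glossed points (all levels of $T$ nonempty and finite, injectivity of $i \mapsto m_i$) are routine to fill in exactly as you indicate.
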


\begin{proof}
Let $(j_n)_{n < \omega}$ be the corresponding sequence of jumps. For every $n \in \N$, define
\begin{equation}\label{eq5}
    V_n = \{v \in \ol{S_n}: v \text{ is involved in } j_i \text{ for some } i \ge n\}.
\end{equation}
For $n \in \N$, the set $V_n$ is nonempty since the jump $j_n$ in particular involves a vertex in $\ol{S_n}$. We also have $\left|V_n\right| \le \left|\ol{S_n}\right| = \left|\ol{S_0}\right| + n$. Hence each $V_n$ is nonempty and finite. Here we treat vertices in sets $V_k$ and $V_\ell$, where $k \neq \ell$, as different objects so that $(V_n)_{n = 0}^\infty$ consists of disjoint sets. This ensures that $(V_n)_{n = 0}^\infty$ satisfies the hypothesis of Lemma \ref{lem4}.

Let $H$ be a graph on $\bigcup_{n = 0}^\infty V_n$ on which an edge is inserted between $v_{n+1} \in V_{n+1}$ and $v_n \in V_n$ whenever one of the following holds:
\begin{enumerate}[(i)]
    \item $v_{n+1}$ is involved in $j_n$, and $v_n$ is the unique vertex in $V_n$ that is also involved in $j_n$.
    \item $v_{n+1}$ is not involved in $j_n$, and $v_n$ denotes the same vertex as $v_{n+1}$.
\end{enumerate}
From this construction, every vertex in $V_{n+1}$ has a unique neighbor in $V_n$.

By Lemma \ref{lem4}, there exists a ray $v_0v_1\dots$ on $H$ such that $v_n \in V_n$ for $n \in \N$. Notice that every pair of vertices $v_n$ and $v_{n+1}$ are either the same vertex, adjacent, or distance two apart in $G$. If they are distance two apart, then $j_n = v_{n+1} \cdot \vv{w} \cdot v_n$ for some unique vertex $w$, so $w$ can be inserted between $v_n$ and $v_{n+1}$ inside the ray. If $v_n = v_{n+1}$, then one of them can be deleted from the ray. After performing all the necessary insertions and deletions, we obtain a walk $W$ on $G$.

The walk $W$ is infinite since if $W$ ended on a vertex $v_k$, then $v_k$ would not be involved in $j_i$ for every $i \ge k$, which contradicts the definition of $V_k$ in (\ref{eq5}). Suppose toward a contradiction that $S_\omega$ is defined, which implies that every vertex is involved in only finitely many jumps. It follows that no vertex is repeated infinitely many times in $W$. Hence by Lemma \ref{lem5}, there exists a subray on $G$, contradicting our assumption that $G$ is rayless.
\end{proof}

\section{Clearability of Cartesian products}\label{sec6}

Recall that the \textit{Cartesian product} $G \cp H$ of two graphs $G$ and $H$ is a graph on $V(G) \times V(H)$ where $\{(g_1, h_1), (g_2, h_2)\} \in E(G \cp H)$ if and only if either $g_1 = g_2$ and $h_1h_2 \in E(H)$, or $h_1 = h_2$ and $g_1g_2 \in E(G)$. Given $h \in V(H)$, the subgraph of $G \cp H$ isomorphic to $G$ induced by vertices of the form $(-, h)$ is denoted by $G_h$. Likewise, the subgraph induced by vertices $(g, -)$ for some fixed $g \in V(G)$ is denoted by $H_g$.

In \cites{beeler2011, kreh2021, loeb2015}, several solvability notions on finite Cartesian products are examined. In this last section, we instead study several clearability conditions for infinite Cartesian products, starting with the straightforward fact that the Cartesian product of two clearable graphs is clearable. We also show that $G \cp P_2$ is clearable whenever $G$ is clearable, where $P_2$ is the $2$-vertex path.

\begin{proposition}\label{prop5}
If $G$ is a clearable graph and $H$ is either a $P_2$ or a clearable graph, then $G \cp H$ is clearable.
\end{proposition}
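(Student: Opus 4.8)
The statement has two cases bundled together, and I would treat them separately. The case where both $G$ and $H$ are clearable is the easy one and I would dispatch it first. The plan is to fix a clearing sequence for each factor and run them ``coordinate by coordinate.'' Concretely, pick $v \in V(G)$ with a clearing sequence $(S_n^G)_{n \le \omega}$ starting from $\ol{S_0^G} = \{v\}$, and $w \in V(H)$ with a clearing sequence $(S_m^H)_{m \le \omega}$ starting from $\ol{S_0^H} = \{w\}$. Start the game on $G \cp H$ with the single empty vertex $(v,w)$. First, inside the copy $G_w$, replay the clearing sequence of $G$; this empties the entire row $G_w$ after $\omega$ steps (every jump $a \cdot \ora{b} \cdot c$ in $G$ lifts to $(a,w)\cdot\ora{(b,w)}\cdot(c,w)$ in $G_w$, which is legal since $G_w$ is an isometric copy of $G$). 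After this first block of $\omega$ moves the state is $V(G \cp H) \setminus (V(G) \times \{w\})$, i.e.\ every column $H_g$ now has exactly the one vertex $(g,w)$ empty. Then, for each $g \in V(G)$ in turn, replay the clearing sequence of $H$ inside the column $H_g$; each such block takes $\omega$ moves and empties that column. Since $G$ is countable we can index the columns by $\N$ (or a finite set), concatenate the blocks, and by Theorem \ref{thm2} the resulting transfinite game — which has order type at most $\omega^2$ — can be compressed to a sequence of length $\omega$ with the same limit state $\emptyset$. The only thing to check carefully is that every vertex is involved in finitely many jumps so that all intermediate limit states are defined, which is immediate here because each vertex lies in exactly one row-block and one column-block and participates in finitely many jumps within each.

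**The $P_2$ case.** This is the substantive part. Here $G$ is clearable but $H = P_2$ is not (a two-vertex graph admits no jumps at all), so the previous strategy fails outright: we cannot clear a column $H_g \cong P_2$ in isolation. Instead I would exploit the extra room the second coordinate provides. Write $V(P_2) = \{0,1\}$, so $G \cp P_2$ consists of two copies $G_0, G_1$ of $G$ with a perfect matching $(g,0)\sim(g,1)$ between them. Fix $v \in V(G)$ and a clearing sequence $(S_n)_{n\le\omega}$ for $G$ starting from $\ol{S_0} = \{v\}$; it consists of jumps $(j_n)_{n<\omega}$ and reaches $S_\omega = \emptyset$. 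The idea is: start the product game with $\ol{S_0} = \{(v,0)\}$, and in the copy $G_0$ replay the $G$-clearing sequence, but \emph{every time} a peg is about to be removed from a vertex $g$ in $G_0$ by some jump, first use the matching edge to ``park'' — actually the cleaner implementation is the reverse. Let me instead run the clearing sequence of $G$ in $G_0$ to empty $G_0$ completely (this works verbatim as in the first case, since $G_0$ is an isometric copy of $G$ and the matching edges are irrelevant to jumps that stay within $G_0$). After this block of $\le \omega$ moves, the state is exactly $V(G_1) = V(G)\times\{1\}$, i.e.\ $G_1$ is full and $G_0$ is empty. Now observe that $G_1$ together with the dangling empty matched vertices in $G_0$ is, for the purpose of further play, a board on which every vertex $(g,1)$ has an empty neighbor $(g,0)$ directly below it.

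**Finishing $G\cp P_2$, and the obstacle.** From the state where $G_1$ is full and $G_0$ is empty, I would clear $G_1$ as follows. Since $G$ is clearable, it is in particular connected, so fix a spanning structure; actually what I want is: pick any vertex $g_0 \in V(G)$ and a sequence $g_0, g_1, g_2, \dots$ enumerating $V(G)$ such that each $g_{n+1}$ is adjacent in $G$ to some earlier $g_i$ — possible because $G$ is connected and countable. Process the vertices of $G_1$ in an order refining this so that when we come to ``dispose of'' $(g,1)$, it still has a filled neighbor $(g',1)$ with $g'g' \in E$ already... hmm, the matched vertex $(g,0)$ is empty but $(g',0)$ may not be. The clean move is: to remove the peg at $(g,1)$, jump $(g',1)\cdot\ora{(g,1)}\cdot(g,0)$ using a neighbor $g' \sim g$ in $G$ that still carries a peg at height $1$, landing in the guaranteed-empty $(g,0)$; this removes $(g,1)$ and deposits a peg at $(g,0)$, which we then immediately remove by $(g,0)\cdot\ora{(g',0)}\cdot(g'',0)$ if possible — but $(g',0)$ might be empty. \textbf{This is the main obstacle:} one must order the elimination of $G_1$ so that (a) each peg removed from $G_1$ has, at the moment of removal, a $G$-neighbor still pegged in $G_1$, and (b) the pegs that get pushed down into $G_0$ can subsequently be cleared, which requires coordinating with a second pass. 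I expect the right bookkeeping is to use the clearing sequence of $G$ \emph{again} on $G_1$ but reroute each jump's landing square down to $G_0$ when the original landing square is already occupied or when it is the last peg — and then note that whatever finite pattern of pegs accumulates in $G_0$ along the way can be mopped up by interleaving $G_0$-internal jumps, invoking clearability of $G$ once more on the (full, then modified) copy $G_0$. Making this interleaving precise while keeping every vertex involved in only finitely many jumps — so that Theorem \ref{thm3}/Theorem \ref{thm2} apply and the whole thing collapses to length $\omega$ with limit $\emptyset$ — is the delicate step; the rest is routine lifting of jumps across the Cartesian product.
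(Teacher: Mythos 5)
Your first case (both $G$ and $H$ clearable) is fine and is essentially the paper's argument with the roles of the two factors swapped: clear one fibre through the initial hole, which leaves exactly one hole in each transversal fibre at the distinguished clearable vertex, then clear those fibres independently and compress the resulting countable game via Theorem \ref{thm2}.

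The $P_2$ case, however, has a genuine gap, and you flag it yourself. Your plan is to place the hole at $(v,0)$, empty $G_0$ completely, and then dispose of $G_1$ by jumps of the form $(g',1)\cdot\ora{(g,1)}\cdot(g,0)$ that dump pegs into $G_0$, hoping to ``mop up'' afterwards. The problem is that clearability of $G$ is a statement about exactly one initial configuration, namely ``all vertices pegged except the distinguished vertex $g_0$''; it gives you no control whatsoever over the scattered, dynamically changing peg patterns that accumulate in $G_0$ as you push pegs down, nor does it guarantee that a peg at $(g,1)$ still has a pegged $G$-neighbour in $G_1$ at the moment you want to eliminate it. So the ``delicate interleaving'' you defer is precisely the unproved content, and there is no obvious way to supply it along these lines. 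The paper avoids the difficulty with a one-jump trick: do \emph{not} put the initial hole over the clearable vertex. Let $g_0$ be the vertex from which $G$ can be cleared, let $g_1$ be a neighbour of $g_0$, and start with $\ol{S_0}=\{(g_1,0)\}$. The single jump $(g_0,1)\cdot\ora{(g_0,0)}\cdot(g_1,0)$ (across the matching edge of $P_2$, landing along a $G_0$-edge) produces $\ol{S_1}=\{(g_0,0),(g_0,1)\}$, i.e.\ each layer $G_0$, $G_1$ is now a copy of $G$ with exactly $g_0$ empty. Clearability of $G$ from $g_0$ then empties each layer independently, and Theorem \ref{thm2} collapses the concatenation to a length-$\omega$ game with limit state $\emptyset$. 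Your instinct to route pegs across the matching edge was right, but it is needed for exactly one move at the start, not for an infinite rerouting scheme.
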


\begin{proof}
Assume that $G$ is a clearable graph with $g_0 \in V(G)$ taken as the initial empty vertex. Suppose that $V(P_2) = \{0, 1\}$. Taking $g_1$ as a vertex of $G$ adjacent to $g_0$, set $\ol{S_0} = \{(g_1, 0)\}$. Now perform $(g_0, 1) \cdot \vv{(g_0, 0)} \cdot (g_1, 0)$ so that $\ol{S_1} = \{(g_0, 0), (g_0, 1)\}$. We can proceed to empty $G \cp P_2$ by clearing $G_0$ and $G_1$ independently.

Now suppose that $H$ is clearable with $h_0 \in V(G)$ as the initial empty vertex. Set $\ol{S_0} = \{(g_0, h_0)\}$. The graph $G \cp H$ can be cleared by first clearing $H_{g_0}$ so that $\ol{S_\omega} = \{(g_0, h): h \in V(H)\}$, before independently clearing every subgraph $G_h$ for $h \in V(H)$.
\end{proof}

A direct consequence of Propositions \ref{prop3} and \ref{prop5} is that the grid $\N^k$ is clearable for $k \ge 2$. As our final result, we prove that the Cartesian product of a freely clearable graph and any countable, connected, locally finite graph is freely clearable.

\begin{theorem}\label{prop6}
If $G$ is a freely clearable graph and $H$ is a countable, connected, locally finite graph, then $G \cp H$ is freely clearable.
\end{theorem}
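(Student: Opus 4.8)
The plan is to reduce the general case to a spanning-tree case and then to the path case handled by Proposition \ref{prop5}, exploiting free clearability of $G$ to choose the empty vertex of each $G$-fiber independently. Fix an arbitrary target vertex $(g^*, h^*)$ of $G \cp H$ to be the sole empty vertex of $\ol{S_0}$. Since $H$ is countable, connected and locally finite, pick a spanning tree $T$ of $H$ rooted at $h^*$; enumerate $V(H) = \{h^* = h_0, h_1, h_2, \dots\}$ so that each $h_k$ with $k \ge 1$ has its $T$-parent appearing earlier in the list (a breadth-first enumeration works). Note $G \cp T$ is a spanning subgraph of $G \cp H$, so it suffices to clear $G \cp T$; any jump legal in $G \cp T$ is legal in $G \cp H$.

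The core subroutine is this: suppose at some countable stage the board has been reduced so that the fibers $G_{h_0}, \dots, G_{h_{k-1}}$ are entirely empty and, for the parent $h_{p(k)}$ of $h_k$ in $T$, the fiber $G_{h_{p(k)}}$ currently has exactly one peg, located at some vertex $(g', h_{p(k)})$ — while every fiber $G_{h_j}$ with $j \ge k$ is still full except that $G_{h_k}$ is missing exactly the vertex adjacent to $(g', h_{p(k)})$ along the $H$-edge, call it $(g', h_k)$ (we maintain this as an invariant). Then the jump $(g', h_{p(k)}) \cdot \ora{(g', h_k)}\,\cdot\,$ wait — the geometry is: $(g', h_{p(k)})$, $(g', h_k)$ are adjacent in the $h$-direction; we want to jump the peg at $(g', h_{p(k)})$ over... but $(g', h_k)$ is empty. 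Instead jump into the $G_{h_k}$ fiber: since $G$ is freely clearable, there is a vertex $g''$ of $G$ adjacent to $g'$; the vertex $(g'', h_k)$ is full and $(g', h_k)$ is empty, so perform $(g'', h_k) \cdot \ora{(g', h_k)}\,\cdot\,$ — no, that lands outside the fiber only if... I will instead use the move $(g', h_{p(k)}) \cdot \ora{(g', h_k)} \cdot$ is impossible; the clean move is to first do $(g', h_{p(k)})$'s peg jumping over an adjacent peg within $G_{h_{p(k)}}$ — but that fiber has only one peg. So the correct subroutine: with $G_{h_{p(k)}}$ holding one peg at $(g',h_{p(k)})$ and $G_{h_k}$ full except at $(g',h_k)$, perform $(g'',h_k)\cdot\ora{(g',h_k)}\cdot(g',h_{p(k)})$ is wrong too since the three vertices aren't collinear. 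Let me restate cleanly: the three collinear vertices for a jump within the $h$-direction through $(g', h_k)$ would require a neighbor of $h_k$ on the far side, i.e. a child; for a jump \emph{within} $G_{h_k}$ we need two pegs in $G_{h_k}$ adjacent in a line, which we have in abundance. So: (1) Working inside the nearly-full fiber $G_{h_k}$, whose only empty vertex is $(g', h_k)$, there is a legal jump $(a, h_k)\cdot\ora{(g', h_k)}\cdot\dots$ — hmm, $(g',h_k)$ is the \emph{empty} one, so we can jump \emph{into} it: pick $g_a g' g_b$ a $3$-vertex path in $G$ (exists since $G$ connected with $\ge 3$ vertices, or handle tiny $G$ separately), and do $(g_a, h_k)\cdot\ora{(g',h_k)}\cdot(g_b,h_k)$ — no, that needs $(g_b,h_k)$ empty. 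The honest approach: forget trying to be slick fiber-by-fiber; instead proceed exactly as in Proposition \ref{prop5}'s second paragraph but iterated along $T$.

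Concretely: since $G$ is freely clearable, clear the fiber $G_{h^*}$ entirely except move its contents appropriately — actually mimic Proposition \ref{prop5}: first, for the edge $h^* h_1$ of $T$, the subgraph $H_{g^*}$... I will follow the second paragraph of Proposition \ref{prop5} directly, generalized. Set $\ol{S_0} = \{(g^*, h^*)\}$. Clear the fiber $H_{g^*}$ (isomorphic to a connected locally finite graph, which need not itself be clearable, so this needs care) — \emph{better}: clear $T_{g^*}$, the copy of the tree $T$ inside the $g^*$-fiber, working from the leaves inward toward $(g^*,h^*)$; since $\ol{S_0}$ is exactly $(g^*,h^*)$ this is a peg-solitaire problem on a rooted tree with the root empty, solvable by the standard folklore (repeatedly jump a leaf's peg toward the root, pruning the tree; formally: a finite rooted tree with only the root empty is solvable, and a locally finite rooted tree is clearable by doing this over $\omega$ stages). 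After this, $\ol{S_\omega}$ contains all of $T_{g^*}$, i.e. every vertex $(g^*, h)$, and every other fiber $G_h$ is still full. Then, exactly as in Proposition \ref{prop5}, independently clear each remaining fiber $G_h$: each is a copy of the freely clearable $G$ missing precisely the vertex $(g^*, h)$, hence clearable with that vertex as the designated empty one. Concatenating these countably many independent clearings (they act on disjoint vertex sets, so by the reasoning around Theorem \ref{thm2}/Lemma \ref{lem2} they can be interleaved into a single $\omega$-sequence) yields $S_\omega = \emptyset$. Since $(g^*, h^*)$ was arbitrary, $G \cp H$ is freely clearable.

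\textbf{Main obstacle.} The delicate point is the first stage: clearing the tree-fiber $T_{g^*}$ down to the single empty root. One must verify that a countably infinite, locally finite rooted tree with only its root initially empty can be reduced, in $\omega$ many jumps, to the all-empty state — or at least to a state where the root alone remains, which then lets one hand control to the $G$-fibers. The clean way is: enumerate the vertices of $T$ by a proper ordering compatible with depth; process them so that at stage $k$ one has already emptied $h_0, \dots, h_{k-1}$ in the $g^*$-fiber, and push the peg at $(g^*, h_k)$ one step toward the root, which is empty or has become empty — this requires maintaining that the parent slot is empty exactly when needed, a bookkeeping argument analogous to Lemma \ref{lem2}. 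I expect the rest (the independent fiber-clearings and the interleaving into an $\omega$-sequence) to be routine given Proposition \ref{prop5} and Lemma \ref{lem2}, but writing the tree-clearing invariant carefully is where the real work lies; one must also dispose of the trivial cases $|V(G)| \le 2$ separately, where the "$3$-vertex path in $G$" arguments degenerate but $G = P_2$ is covered by Proposition \ref{prop5} and $|V(G)| = 1$ makes $G$ not clearable, so that case is vacuous.
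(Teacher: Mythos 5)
Your reduction has a fatal step: the first stage, in which you clear the fiber $T_{g^*}$ (all vertices $(g^*,h)$) using jumps inside that fiber, so that afterwards every other fiber $G_h$ is full except for the single hole $(g^*,h)$. This cannot be done in general. First, no graph can be emptied by jumps confined to its own vertex set if it is finite: every jump consumes two pegs and creates one, so the last peg can never be removed; since the theorem allows finite $H$ (e.g.\ $H = P_3$), the plan already fails there. Second, the ``folklore'' claim that a rooted tree with only the root empty is solvable, let alone clearable, is false: $P_3$ with its middle vertex empty admits no legal jump at all; the star $K_{1,3}$ gets stuck with two pegs no matter where the hole starts; and the double ray $\Z$ (a tree, and a legitimate choice of $H$ or of your spanning tree $T$) is shown in Proposition \ref{prop4} to be neither solvable nor clearable. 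So the invariant you need after stage one --- $\ol{S_\omega} \supseteq \{(g^*,h): h \in V(H)\}$ with all other fibers untouched --- is unattainable, and even the weaker ``solvable'' outcome (one peg left in the tree-fiber) both fails for many trees and would leave you with a completely full fiber $G_{h'}$ that your second stage does not handle. You correctly flagged this stage as the main obstacle, but it is not a bookkeeping issue; the approach itself is the problem.

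The paper's proof avoids ever emptying an $H$-fiber. Fix $g_0$ (your $g^*$) and a neighbor $g_1$ in $G$, and take a traversal $(h_k)$ of $H$ starting at $h_0$; the single hole is then propagated through the fibers two columns at a time: if the already-processed fiber $G_{h_{a_k}}$ has its unique hole at $(g_0,h_{a_k})$, perform $(g_1,h_k)\cdot\ora{(g_1,h_{a_k})}\cdot(g_0,h_{a_k})$, and symmetrically if the hole is at $(g_1,h_{a_k})$. Each such jump leaves \emph{exactly one} empty vertex (at $g_0$ or $g_1$) in both the old and the new fiber, so after this $\omega$-phase every fiber $G_h$ is a copy of $G$ with a single hole, and free clearability of $G$ (not mere clearability --- the hole may sit at $g_0$ or $g_1$, and $g_0$ was arbitrary) finishes each fiber independently; local finiteness of $H$ guarantees each vertex is involved in only finitely many jumps, so the limit states are defined. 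No solvability or clearability assumption on $H$ is ever needed, which is precisely what your construction implicitly requires and cannot have.
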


\begin{proof}
Suppose that $|V(H)| = n$, where $n \le \omega$. We say that an enumeration $(h_k)_{0 \le k < n}$ of $V(H)$ is a \emph{traversal of $H$} if $H[h_0, \dots, h_k]$---the subgraph of $H$ induced by the vertices $h_0, \dots, h_k$---is a connected subgraph for $0 \le k < n$. For any $h_0 \in V(H)$, it can be shown that $H$ admits a traversal with $h_0$ as its starting vertex (see \cite{bhaskar2018} for an example of the construction).

Given $\ol{S_0} = \{(g_0, h_0)\}$, we will provide a way to empty $G \cp H$. Pick any vertex $g_1$ adjacent to $g_0$ in $G$, and let $(h_k)_{k < n}$ be a traversal of $H$. For $1 \le k < n$, take any integer $a_k < k$ such that $h_{a_k}$ is adjacent to $h_k$, and then define the jump
\[j_{k - 1} = \begin{cases}
(g_1, h_k) \cdot \vv{(g_1, h_{a_k})} \cdot (g_0, h_{a_k}), & \text{if } (g_0, h_{a_k}) \text{ is pegless}, \\ (g_0, h_k) \cdot \vv{(g_0, h_{a_k})} \cdot (g_1, h_{a_k}), & \text{if } (g_1, h_{a_k}) \text{ is pegless}.
\end{cases}\]
This construction is well-defined since for $0 \le m \le k < n$, the set $\ol{S_k} \cap V(G_{h_m})$ is either equal to $\{(g_0, h_m)\}$ or $\{(g_1, h_m)\}$.

Let $S$ be the resulting state after the sequence of jumps $(j_k)$ is performed. We claim that $S$ is defined and that $\left|\ol{S} \cap V(G_h)\right| = 1$ for all $h \in V(H)$. By local finiteness, no integer can be repeated infinitely many times in the sequence $(a_k)$. Hence every vertex is involved in finitely many jumps, and $S$ is thus defined. Also, we know that $\left|\ol{S_k} \cap V(G_{h_m})\right| = 1$ for $0 \le m \le k < n$. Consequently, we have for every $h \in V(H)$ and all sufficiently large $k$ (depending on $h$) that $\left|\ol{S_k} \cap V(G_h)\right| = 1$, and $\ol{S_k} \cap V(G_h)$ is either the set $\{(g_0, h)\}$ or $\{(g_1, h)\}$. Thus $\left|\ol{S} \cap V(G_h)\right| = 1$ as required. Continuing on from the state $S$, we can independently clear each $G_h$ for $h \in V(H)$, hence emptying the graph.
\end{proof}

\begin{bibdiv}
\begin{biblist}

\bib{beasley1985}{book}{
  title={The ins and outs of peg solitaire},
  author={Beasley, John D.},
  year={1985},
  publisher={Oxford University Press}
}

\bib{beeler2011}{article}{
  title={Peg solitaire on graphs},
  author={Beeler, Robert A.},
  author={Hoilman, D. Paul},
  journal={Discrete Math.},
  volume={311},
  number={20},
  pages={2198--2202},
  year={2011},
  publisher={Elsevier}
}

\bib{beeler2012}{article}{
  title={Peg solitaire on the windmill and the double star graphs.},
  author={Beeler, Robert A.},
  author={Hoilman, D. Paul},
  journal={Australas. J. Comb.},
  volume={53},
  pages={127--134},
  year={2012}
}

\bib{beeler2013}{article}{
  title={Fool’s solitaire on graphs},
  author={Beeler, Robert A.},
  author={Rodriguez, Tony},
  journal={Involve},
  volume={5},
  number={4},
  pages={473--480},
  year={2013},
  publisher={Mathematical Sciences Publishers}
}

\bib{beeler2015}{article}{
  title={Peg solitaire on trees with diameter four.},
  author={Beeler, Robert A.},
  author={Walvoort, Clayton A.},
  journal={Australas. J. Comb.},
  volume={63},
  pages={321--332},
  year={2015}
}

\bib{beeler2016}{article}{
  title={Extremal results for peg solitaire on graphs},
  author={Beeler, Robert A},
  author={Gray, Aaron D},
  journal={Bull. Inst. Combin. Appl.},
  volume={77},
  pages={30--42},
  year={2016}
}

\bib{bell2007diagonal}{article}{
  title={Diagonal peg solitaire},
  author={Bell, George I.},
  journal={Integers},
  volume={7},
  pages={G01},
  year={2007}
}

\bib{bell2007fresh}{article}{
  title={A fresh look at peg solitaire},
  author={Bell, George I.},
  journal={Math. Mag.},
  volume={80},
  number={1},
  pages={16--28},
  year={2007},
  publisher={Taylor \& Francis}
}

\bib{bell2008}{article}{
  title={Solving triangular peg solitaire},
  author={Bell, George I.},
  journal={J. Integer Seq.},
  volume={11},
  number={4},
  pages={Article 08.4.8},
  year={2008}
}

\bib{berlekamp2004}{book}{
  title={Winning ways for your mathematical plays, volume 4},
  author={Berlekamp, Elwyn R.},
  author={Conway, John H.},
  author={Guy, Richard K.},
  year={2004},
  publisher={AK Peters/CRC Press}
}

\bib{bhaskar2018}{article}{
  title={Algorithmic traversals of infinite graphs},
  author={Bhaskar, Siddharth},
  author={Kienzle, Anton Jay},
  journal={arXiv preprint},
  eprint={https://arxiv.org/abs/1810.09974},
  year={2018}
}

\bib{bromwich1908}{book}{
  title={An introduction to the theory of infinite series},
  author={Bromwich, Thomas John I'Anson},
  year={1908},
  publisher={Macmillan}
}

\bib{de2020}{article}{
  title={Peg solitaire on banana trees.},
  author={de Wiljes, Jan-Hendrik},
  author={Kreh, Martin},
  journal={Bull. Inst. Combin. Appl.},
  volume={90},
  pages={63--86},
  year={2020}
}

\bib{de2022}{article}{
  title={Making graphs solvable in peg solitaire},
  author={de Wiljes, Jan-Hendrik},
  author={Kreh, Martin},
  journal={Electron. J. Graph Theory Appl. (EJGTA)},
  volume={10},
  number={2},
  pages={375--383},
  year={2022}
}

\bib{diestel2017}{book}{
  title={Graph theory},
  author={Diestel, Reinhard},
  year={2017},
  edition={5},
  publisher={Springer}
}

\bib{engbers2015}{article}{
  title={Reversible peg solitaire on graphs},
  author={Engbers, John},
  author={Stocker, Christopher},
  journal={Discrete Math.},
  volume={338},
  number={11},
  pages={2014--2019},
  year={2015},
  publisher={Elsevier}
}

\bib{eriksson1995}{article}{
  title={Twin jumping checkers in $Z^d$},
  author={Eriksson, Henrik},
  author={Lindstr{\"o}m, Bernt},
  journal={European J. Combin.},
  volume={16},
  number={2},
  pages={153--157},
  year={1995},
  publisher={Elsevier}
}

\bib{fraenkel2012}{article}{
  title={Combinatorial games: selected bibliography with a succinct gourmet introduction},
  author={Fraenkel, Aviezri},
  journal={Electron. J. Combin.},
  pages={DS2},
  year={2012}
}

\bib{helleloid2009}{article}{
  title={Graph pegging numbers},
  author={Helleloid, Geir},
  author={Khalid, Madeeha},
  author={Moulton, David Petrie},
  author={Wood, Philip Matchett},
  journal={Discrete Math.},
  volume={309},
  number={8},
  pages={1971--1985},
  year={2009},
  publisher={Elsevier}
}

\bib{hentzel1968}{article}{
  title={Triangular puzzle peg},
  author={Hentzel, Irvin Roy},
  booktitle={Mathematical Solitaires \& Games},
  editor={Schwartz, Benjamin L.},
  pages={16--19},
  year={1968},
  publisher={Routledge}
}

\bib{jefferson2006}{article}{
  title={Modelling and solving English peg solitaire},
  author={Jefferson, Christopher},
  author={Miguel, Angela},
  author={Miguel, Ian},
  author={Tarim, S Armagan},
  journal={Comput. Oper. Res.},
  volume={33},
  number={10},
  pages={2935--2959},
  year={2006},
  publisher={Elsevier}
}

\bib{kreh2021}{article}{
  title={Peg solitaire on Cartesian products of graphs},
  author={Kreh, Martin},
  author={de Wiljes, Jan-Hendrik},
  journal={Graphs Combin.},
  volume={37},
  number={3},
  pages={907--917},
  year={2021},
  publisher={Springer}
}

\bib{levavi2013}{article}{
  author={Ariel Levavi},
  title={Pegging numbers for various tree graphs},
  journal={Ars Combin.},
  volume={110},
  pages={387--408},
  year={2013},
  publisher={Charles Babbage Research Centre}
}

\bib{loeb2015}{article}{
  title={Fool’s solitaire on joins and Cartesian products of graphs},
  author={Loeb, Sarah},
  author={Wise, Jennifer},
  journal={Discrete Math.},
  volume={338},
  number={3},
  pages={66--71},
  year={2015},
  publisher={Elsevier}
}

\bib{tatham}{article}{
  title={Reaching row five in solitaire army},
  author={Tatham, Simon},
  author={Taylor, Gareth},
  eprint={https://tartarus.org/gareth/maths/stuff/solarmy.pdf}
 }

\end{biblist}
\end{bibdiv}

\end{document}